\newcommand{\kk}{\Bbbk}
\def\SL{\operatorname{SL}}
\def\SL2{\operatorname{SL}_{2}(K)}
\def\GL2{\operatorname{GL}_{2}(K)}
\def\INVSL2{$K[V]^{operatorname{SL}_{2}(K)}$}
\def\INVSO2{$K[V]^{operatorname{SO}_{2}(K)}$}
\def\INVGL2{$K[V]^{operatorname{GL}_{2}(K)}$}
\def\GL{\operatorname{GL}}
\def\SL{\operatorname{SL}}
\def\Quot{\operatorname{Quot}}
\def\codim{\operatorname{codim}}
\def\wt{\operatorname{wt}}
\def\Tr{\operatorname{Tr}}
\newtheorem{Lemma}{Lemma}
\newtheorem{Theorem}[Lemma]{Theorem}
\newtheorem{Corollary}[Lemma]{Corollary}
\newtheorem{prop}[Lemma]{Proposition}
\newtheorem*{Corollary of Conjecture}{Corollary of Conjecture}
\theoremstyle{definition}
\theoremstyle{remark}
\newtheoremstyle{Acknowledgments}
  {}
    {}
     {}
     {}
    {\bfseries}
    {}
     {.5em}
     {\thmname{#1}\thmnumber{ }\thmnote{ (#3)}}
\theoremstyle{Acknowledgments}
\newtheorem{ack}{Acknowledgments.}
\title[Cohen-Macaulay covariants]{Cohen-Macaulay modules of covariants for cyclic-$p$-groups}
\author{Jonathan Elmer}
\address{Middlesex University\\
The Burroughs, Hendon, London\\
NW4 4BT UK}
\email{j.elmer@mdx.ac.uk}
\date{\today}
\subjclass[2010]{13A50}
\keywords{Invariant theory,  module of covariants, free module, Cohen-Macaulay module}
\begin{document}

\begin{abstract}

Let $G$ be a a finite group, $\kk$ a field of characteristic dividing $|G|$ and and $V,W$ $\kk G$-modules. Broer and Chuai \cite{BroerChuaiRelative} showed that if $\codim(V^G) \leq 2$ then the module of covariants $\kk[V,W]^G = (\kk[V] \otimes W)^G$ is a Cohen-Macaulay module, hence free over a homogeneous system of parameters for the invariant ring $\kk[V]^G$.

In the present article we prove a general result which allows us to determine whether a set of elements of a free $A$-module $M$ is a generating set, for any $\kk$-algebra $A$. We use this result to find generating sets for all modules of covariants $\kk[V,W]^G$ over a homogeneous system of parameters, where $\codim(V^G) \leq 2$ and $G$ is a cyclic $p$-group.
\end{abstract}

\maketitle

\section{Introduction}  

Let $G$ be a finite group, $\kk$ a field and let $V,W$ be a pair of finite-dimensional $\kk G$-modules. Then $G$-acts on the set $\kk[V,W] = S(V^*) \otimes W$ of polynomial morphisms $V \rightarrow W$ via the formula
\[(g \phi )(v) = g \phi (g^{-1} v). \]
Morphisms fixed under the action of $G$ are called \emph{covariants}, and we denote the set of covariants by $\kk[V,W]^G$. In the special case $W =\kk$ we write $\kk[V,\kk] = \kk[V]$ and the corresponding fixed points $\kk[V]^G$ are called invariants. The set of invariants is a $\kk$-algebra, and pointwise multiplication endows $\kk[V,W]^G$ with the structure of a $\kk[V]^G$-module.

Many theorems about invariant algebras have analogues for covariants which are less well-known. For example, suppose $\kk$ has characteristic zero. Then it is well-known that $\kk[V]^G$ is a polynomial ring if and only if $G$ is generated by reflections (elements fixing a subspace of codimension 1 in $V$). It is less well-known that these conditions are equivalent to $\kk[V,W]^G$ being a free module over $\kk[V]^G$ for all $W$ \cite{Chevalley}, \cite{SheppardTodd}. Similarly, it is well-known that $\kk[V]^G$ is always a Cohen-Macaulay algebra. It is less well-known that $\kk[V,W]^G$ is always a Cohen-Macaulay module over $\kk[V]^G$ \cite{HochsterEagon}.

In the modular case, much less is known about the structure of covariant modules. The following result is taken from \cite{BroerChuaiRelative}:

\begin{prop}\label{broerchuaimain}
Let $\kk$ be field whose characteristic divides $|G|$, where $G$  is a finite group acting on vector spaces $V,W$. 

\begin{itemize}
\item Suppose that $\codim(V^G) = 1$. Then $\kk[V,W]^G$ is a free $\kk[V]^G$-module.

\item Suppose that $\codim(V^G) \leq 2$. Then $\kk[V,W]^G$ is a Cohen-Macaulay $\kk[V]^G$-module.
\end{itemize}
\end{prop}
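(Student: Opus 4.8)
The plan is to establish the stronger homological statement that $M := \kv \otimes W$ has invariant submodule $M^G = \kk[V,W]^G$ which is a maximal Cohen--Macaulay module over $R := \kvg$, that is, $\depth_R(M^G) = \dim V =: n$. Since $\kv$ is a polynomial ring and $M$ is $\kv$-free, $M$ is itself maximal Cohen--Macaulay over $\kv$; the content is to transfer this property to the invariants. Two general facts are available at the outset: $M^G$ is torsion-free, and in fact \emph{reflexive}, over the normal domain $R$ (the invariant submodule of a reflexive module under a finite group is reflexive), so $M^G$ automatically satisfies Serre's condition $(S_2)$, and hence $H^0_{\mathfrak m}(M^G) = H^1_{\mathfrak m}(M^G) = 0$ where $\mathfrak m = \kvgplus$. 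For small $n$ this already settles the claim, and the whole argument will consist of reducing to that situation.

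The reduction I would carry out is to cut down by parameters transversal to $V^G$. Because $\codim(V^G) = c \le 2$, the ideal of $V^G$ in $R$ has height $c$, so I can choose a homogeneous system of parameters $f_1,\dots,f_n$ for $R$ whose last $n-c$ members $f_{c+1},\dots,f_n$ restrict to a homogeneous system of parameters on $V^G$ while $f_1,\dots,f_c$ cut out $V^G$ up to radical. The key claim is that $f_{c+1},\dots,f_n$ form a regular sequence on $M^G$; granting this, $\depth_R(M^G) = (n-c) + \depth_{\bar R}(\bar M)$, where $\bar R = R/(f_{c+1},\dots,f_n)$ has dimension $c$ and $\bar M = M^G/(f_{c+1},\dots,f_n)M^G$. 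Thus it suffices to show that $\bar M$ is maximal Cohen--Macaulay over the $c$-dimensional ring $\bar R$.

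For the two endgames the dimension count does the work. If $c = 1$ (the first bullet), then $\bar R$ is a one-dimensional normal, hence regular, ring and $\bar M$ is torsion-free over it, so $\bar M$ is free; lifting a homogeneous basis through the regular sequence shows $M^G$ itself is free over $R$. (Consistently, when $\codim(V^G)=1$ the ring $R$ is in fact a polynomial ring, and a maximal Cohen--Macaulay module over a regular ring is free by Auslander--Buchsbaum.) If $c = 2$ (the second bullet), then $\bar R$ is a two-dimensional normal Cohen--Macaulay domain, over which reflexive modules coincide with maximal Cohen--Macaulay modules; since reflexivity of $M^G$ descends to $\bar M$, we obtain $\depth_{\bar R}(\bar M) = 2$, as required.

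The main obstacle is the regular-sequence claim underlying the reduction: that cutting by parameters transversal to $V^G$ does not drop depth, equivalently that $H^i_{\mathfrak m}(M^G) = 0$ for $i < \min(n,\,\dim V^G + 2)$. I expect to attack this by comparing the two Grothendieck spectral sequences computing the derived functors of the composite $(-)^G \circ \Gamma_{\mathfrak m}$, namely $H^p\big(G, H^q_{\mathfrak m}(M)\big)$ and $H^p_{\mathfrak m}\big(H^q(G,M)\big)$, which share a common abutment. Because $M$ is $\kv$-free, the first collapses onto its row $q = n$, so the abutment vanishes below degree $n$; the second then forces the vanishing of $H^i_{\mathfrak m}(M^G)$ for the required $i$, provided the error terms $H^q(G,M)$ with $q \ge 1$ are sufficiently concentrated. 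The crux — and the place where the hypothesis on $\codim(V^G)$ and the structure of $G$ genuinely enter — is to show that for $q \ge 1$ the higher cohomology $H^q(G,M)$ is supported on the image of $V^G$ and has depth at least $\dim V^G$ there; the ``$+2$'' threshold, and hence the restriction $\codim(V^G)\le 2$, reflects exactly that the first possibly-nonzero differential in the spectral sequence is $d_2$.
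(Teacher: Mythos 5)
You should first note that the paper does not prove Proposition \ref{broerchuaimain} at all: it is quoted verbatim from Broer and Chuai, so your attempt has to be measured against their argument, which indeed runs through an Ellingsrud--Skjelbred-type depth bound $\depth_{\kk[V]^G}\kk[V,W]^G \geq \min(\dim V, \dim V^G + 2)$ proved by exactly the two-spectral-sequence comparison you sketch. Your architecture is therefore the right one, but your proposal is not a proof, because the step you yourself label the crux --- that for $q \geq 1$ the higher cohomology modules $H^q(G,\kk[V]\otimes W)$ have depth at least $\dim V^G$ over $\kk[V]^G$ --- is the entire technical content of the theorem: one must actually manufacture a regular sequence of $\dim V^G$ homogeneous invariants on every such cohomology module, and you only announce an intention to attack this. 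Moreover, your formulation of the crux is false in general: for an arbitrary finite group $G$ these modules are supported on the union of the images of the fixed spaces $V^g$ as $g$ ranges over the elements of order $p$, which in general strictly contains the image of $V^G$. Fortunately the support claim is not what the differential count needs; only the depth inequality is required to kill the terms $E_r^{p-r,\,r-1} = H^{p-r}_{\mathfrak m}\bigl(H^{r-1}(G,\kk[V]\otimes W)\bigr)$ mapping onto $E_r^{p,0}$ for $p < \dim V^G + 2$, and that inequality is precisely the unproven lemma.

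Your reduction and endgames also contain genuine errors. The ring $\bar R = R/(f_{c+1},\dots,f_n)$, a quotient of the normal domain $R = \kk[V]^G$ by part of a homogeneous system of parameters, is in general neither normal nor a domain, so ``one-dimensional normal, hence regular'' and ``over a two-dimensional normal domain reflexive coincides with maximal Cohen--Macaulay'' do not apply to $\bar R$; and the assertion that reflexivity of $M^G$ descends to $\bar M$ is unjustified --- indeed, establishing that $\bar M$ has positive depth after cutting is essentially equivalent to the regular-sequence claim you ``granted'' in order to perform the cut, so the reduction is circular as arranged. None of this detour is needed: once the depth bound is in hand, $\codim(V^G)\leq 2$ gives $\min(\dim V, \dim V^G+2) = \dim V$, which is the second bullet directly, and in codimension one, freeness follows from maximal Cohen--Macaulayness via Auslander--Buchsbaum together with the fact --- which your parenthetical asserts but which also requires proof, for instance by exhibiting a triangular (Nakajima-type) basis for the group fixing a hyperplane pointwise --- that $\kk[V]^G$ is then a polynomial ring. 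In short: keep the spectral-sequence skeleton, discard the cutting argument, and supply the cohomological depth lemma, which is where the hypothesis $\codim(V^G)\leq 2$ and all the real work of Broer--Chuai reside.
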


In addition, Broer and Chuai give necessary and sufficient conditions for a set of covariants to generate $\kk[V,W]^G$ freely over $\kk[V]^G$. 

Suppose $\kk[V,W]^G$ is a Cohen-Macaulay $\kk[V]^G$-module. This means that there exists a polynomial subalgebra $A \subseteq \kk[V]^G$ (generated by a homogeneous system of parameters) over which $\kk[V,W]^G$ is finitely generated and free. 

The purpose of the present article is to give a method to test whether a set of covariants generates $\kk[V,W]^G$ over $A$.. We obtain explicit sets of covariants generating $\kk[V,W]^G$ freely over a homogeneous system of parameters when $G$ is a cyclic $p$-group, $\codim(V^G) \leq 2$ and $W$ is arbitrary. Note that in this case we have that $\kk[V]^G$ itself is Cohen-Macaulay (apply Proposition \ref{broerchuaimain} with $W$ trivial). In fact, it follows from \cite{Ellingsrud} and \cite{FleischmannCohomConnectivity} that $\kk[V]^G$ is Cohen-Macaulay \emph{if and only if} $\codim(V^G) \leq 2$.

Suppose $G$ is a cyclic group of order $q = p^k$. Recall the following concerning representation theory of $G$; there are exactly $q$ isomorphism classes of indecomposable modules for $G$. The dimensions of these are $1,2,3, \ldots, q$, and we denote by $V_r$ the unique indecomposable of dimension $r$. A generator $\sigma$ of $G$ acts on $V_r$ by left-multiplication with a Jordan block of size $r$ with eigenvalue 1. If $r \leq p^l$ for $l<k$ we have that $\sigma^{p^l}$ acts trivially on $V_r$; thus, $V_r$ is faithful only when $r>p^{k-1}$. The dimension of the fixed-point space $V^G$ is one for any indecomposable $V$. As a consequence, we obtain:

\begin{prop}\label{listmodules} Suppose $V$ is a faithful representation of a cyclic group $G$ of order $p^k$, and that $\codim(V^G) \leq 2$. Suppose in addition that $V$ contains no trivial direct summands. Then one of the following hold:

\begin{enumerate}
\item $|G| = p$ and $V \cong V_2$;

\item $|G| = p>2$ and $V \cong V_3$;

\item $|G| = p$ and $V \cong V_2 \oplus V_2$.

\item $|G| = 4$ and $V \cong V_3$;

\end{enumerate}
\end{prop}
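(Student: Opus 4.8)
The plan is to reduce the classification to a short arithmetic constraint coming from the codimension hypothesis, and only then to impose faithfulness. First I would write $V$ as a direct sum of indecomposables, $V \cong \bigoplus_{i=1}^m V_{r_i}$. Since $V$ has no trivial summands, every $r_i \geq 2$. Using the facts recorded above that $\dim V_{r_i}^G = 1$ and that fixed points are additive over direct sums, I obtain $\dim V^G = m$, and hence
\[
\codim(V^G) = \dim V - \dim V^G = \sum_{i=1}^m (r_i - 1).
\]
Each summand $r_i - 1$ is at least $1$, so the hypothesis $\codim(V^G) \leq 2$ forces $\sum_i (r_i - 1) \in \{1,2\}$ and leaves only three possibilities for the abstract module type: $V \cong V_2$ (with $m=1$, codimension $1$), $V \cong V_3$ (with $m=1$, codimension $2$), or $V \cong V_2 \oplus V_2$ (with $m=2$, codimension $2$). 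Any configuration with $m \geq 3$, or with a summand of dimension $\geq 4$, is ruled out at once since it would force the sum above to exceed $2$.

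Next I would impose faithfulness on each of the three candidates, recalling the two constraints on $V_r$: it exists as a $G$-module only when $r \leq p^k$, and it is faithful precisely when $r > p^{k-1}$. For $V \cong V_2$, faithfulness reads $2 > p^{k-1}$, which forces $p^{k-1} = 1$, i.e. $k=1$ and $|G| = p$, giving case (1). For $V \cong V_2 \oplus V_2$ the kernel of the action equals that of a single $V_2$ (an element acts trivially on the sum iff it does so on one copy of $V_2$), so faithfulness is again equivalent to $k=1$ and $|G| = p$, giving case (3).

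The remaining and most delicate candidate is $V \cong V_3$, where I must combine both constraints simultaneously: existence demands $p^k \geq 3$ and faithfulness demands $p^{k-1} < 3$. For an odd prime these two inequalities hold exactly when $k=1$ (so $|G| = p > 2$), yielding case (2); for $p = 2$, existence forces $k \geq 2$ while faithfulness forces $k \leq 2$, so $k = 2$ and $|G| = 4$, yielding case (4). I expect this $V_3$ case to be the main obstacle — not because it is computationally hard, but because it is the one place where the existence bound $r \leq p^k$ genuinely interacts with faithfulness. It is precisely this bound that excludes the superficially plausible pair $|G| = 2$, $V \cong V_3$, for which $V_3$ simply does not exist among the indecomposables. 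Collecting the four surviving configurations completes the proof.
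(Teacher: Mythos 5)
Your proof is correct and takes essentially the same approach as the paper: the paper derives Proposition \ref{listmodules} as an immediate consequence of the facts it records just beforehand ($\dim V_r^G=1$ for every indecomposable, existence of $V_r$ only for $r\leq p^k$, and faithfulness exactly when $r>p^{k-1}$), and your argument --- computing $\codim(V^G)=\sum_i(r_i-1)$ to isolate the three module shapes $V_2$, $V_3$, $V_2\oplus V_2$, then applying the faithfulness and existence bounds to pin down $|G|$, including the $p=2$ interaction that forces $|G|=4$ for $V_3$ --- is precisely that deduction made explicit.
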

 
This paper is organised as follows: in the next section we will prove a very general result which allows us to test whether a set of elements of a free module is a basis (Proposition \ref{freetest}). In the third section we will show how modular covariants for cyclic groups can be viewed as the kernel of a certain homomorphism. This generalises \cite[Proposition~4]{ElmerCov}. As an incidental result we show that the kernel of the relative transfer map $\Tr^G_H: \kk[V]^H \rightarrow \kk[V]^G$ can also be viewed as a module of covariants. 

The last section gives explicit generating sets of covariants over a homogeneous system of parameters for the four cases explained above. In the first two cases, such generating sets were obtained in \cite{ElmerCov}, but the proof here using Proposition \ref{freetest} is shorter in the second case, as we can circumvent computation of the Hilbert series of $\kk[V,W]^G$. The results in the third and fourth cases are new.

\begin{ack}
The author would like to thank an anonymous referee of \cite{ElmerCov} for the communication which inspired section 3 of this paper.
\end{ack}

\section{The $s$-invariant and freeness}

Let $\kk$ be a field and $A$ a $\kk$-algebra. Let $M$ be a finitely-generated module over $A$. A set $g_1,g_2, \ldots, g_r$ of elements of $M$ is called a {\it generating set} if $M = A(g_1,g_2,\ldots, g_r)$. It is said to be $A$-\emph{independent} if we have
\[\sum_{i=1}^r a_ig_i  = 0 \Rightarrow a_1=a_2= \ldots = a_r=0 \ \text{for all} \ a_1,a_2, \ldots, a_r \in A.\] An $A$-independent, generating set for $M $ is called a \emph{basis} of $M$. 

The most familiar setting is where $A = \kk$. In that case $M$ is a finite-dimensional vector space. It is then well known that $M$ has a basis, and any maximal $A$-independent set, or minimal generating set for $M$ is a basis. This may not be the case for modules over other algebras. A module which has a basis is called \emph{free}, and one can show that every free module over $A$ is isomorphic to $A^r$ for some $r$.

Now suppose $A = \oplus_{i \geq 0} A_i$ is a graded $\kk$-algebra with $A_0 = \kk$. Let $M = \oplus_{i \geq 0} M_i$ be a graded $A$ module. We have Hilbert series

\[H(A,t) = \sum_{i \geq 0} \dim(A_i)t^i,\]
\[H(M,t) = \sum_{i \geq 0} \dim(M_i)t^i.\]

Consider the quotient of these series, expanded about $t = 1$:

\[H(M, A; t):= \frac{H(M,t)}{H(A,t)} = a_0 + a_1(t-1) + \ldots  \]

If $M$ is finitely generated, then by the Hilbert-Poincar\'e theorem we can write $$H(M,t) = \frac{f(t)}{H(A,t)}$$ for some polynomial $f(t)$. 
In that case we have 
\[a_0 = f(1) = r(M,A).\]
This is called the \emph{rank} of $M$ over $A$. If $M$ has finite projective dimension (for example, if $A$ is regular) it can be shown to equal the dimension of $M$ as a vector space over $\Quot(A)$. We also have
\[a_1= f'(1) = s(M,A). \] This is called the \emph{$s$-invariant} of $M$ over $A$.

The following is an easy consequence of the definition of $H(M, A;t)$:

\begin{prop}[\cite{BroerChuaiRelative}]\label{ssubalg} Let $A$ be a $\kk$-algebra and $B \subset A$ a subalgebra of $A$ over which $A$ is finitely generated. Then we have
\begin{itemize}
\item[(a)] \[r(M,B) = r(M,A)r(A,B);\]
\item[(b)] \[s(M,B) = r(M,A)s(A,B) + r(A,B)s(M,A).\]
\end{itemize}
\end{prop}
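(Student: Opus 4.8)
The plan is to reduce both statements to a single multiplicativity property of the Hilbert series quotient, together with matching the first two Taylor coefficients of a product of power series expanded about $t=1$.

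First I would record the telescoping identity obtained by cancelling $H(A,t)$:
\[ H(M,B;t) = \frac{H(M,t)}{H(B,t)} = \frac{H(M,t)}{H(A,t)}\cdot\frac{H(A,t)}{H(B,t)} = H(M,A;t)\, H(A,B;t), \]
which is an identity of rational functions in $t$ following directly from the definition of $H(-,-;t)$. This is the whole content of the proposition; everything else is bookkeeping with coefficients.

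Next, since $M$ is finitely generated over $A$ and $A$ is finitely generated over $B$ (so $M$ is finitely generated over $B$ as well), the Hilbert--Poincar\'e theorem guarantees that each of $H(M,A;t)$, $H(A,B;t)$ and $H(M,B;t)$ is in fact a polynomial, hence regular at $t=1$ and expandable in powers of $(t-1)$. I would write
\[ H(M,A;t) = r(M,A) + s(M,A)(t-1) + O((t-1)^2), \]
\[ H(A,B;t) = r(A,B) + s(A,B)(t-1) + O((t-1)^2), \]
and substitute these into the identity above. Expanding the product and comparing with
\[ H(M,B;t) = r(M,B) + s(M,B)(t-1) + O((t-1)^2) \]
then matches coefficients: the constant term gives $r(M,B) = r(M,A)r(A,B)$, which is (a), and the coefficient of $(t-1)$ gives $s(M,B) = r(M,A)s(A,B) + r(A,B)s(M,A)$, which is (b).

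There is no substantive obstacle here, only a point requiring care: one must check that all three quotients are genuinely regular at $t=1$, so that the invariants $r$ and $s$ are well defined as the first two Taylor coefficients and the product rule for those coefficients is legitimate. This regularity is exactly what the finite generation hypotheses supply via Hilbert--Poincar\'e, since each quotient is then a polynomial rather than a mere formal power series.
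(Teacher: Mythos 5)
Your proof is correct and is exactly the ``easy consequence of the definition'' that the paper alludes to: the paper itself gives no proof of Proposition \ref{ssubalg} (it cites \cite{BroerChuaiRelative} and notes it follows directly from the definition of $H(M,A;t)$), and your telescoping factorisation $H(M,B;t)=H(M,A;t)\,H(A,B;t)$ followed by comparison of the constant and $(t-1)$-coefficients is the intended argument. The only caveat is that the quotients need not literally be polynomials for general $A$ --- regularity at $t=1$ is what matters, and it holds since the pole order of $H(M,t)$ at $t=1$ (the dimension of $M$) is bounded by that of $H(A,t)$ --- but this matches the conventions the paper itself adopts, so your argument goes through.
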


The next result is the main result of this section, and will be used frequently in the remainder of the article.

\begin{prop}\label{freetest} Let $A$ be a regular, graded $\kk$-module with $A_0= \kk$ and let $M$ be a finite generated free graded module over $A$. Let $g_1, g_2, \ldots, g_r$ be a $A$-independent set of elements of $M$, where $r = r(M,A)$. Then
\[\sum_{i=1}^r \deg(g_i) \geq s(M,A)\] with equality if and only if $M$ is generated by $g_1, \ldots, g_r$.
\end{prop}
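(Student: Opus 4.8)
The plan is to exploit the fact that a regular graded $\kk$-algebra with $A_0 = \kk$ is a polynomial ring, hence a positively graded domain in which every nonzero homogeneous element has degree $\geq 0$, with degree $0$ occurring exactly for the nonzero scalars. Since $M$ is a finitely generated graded free module, and $r = r(M,A)$ is its rank, I would first fix a homogeneous basis $b_1, \ldots, b_r$ of $M$ with $\deg b_i = d_i$. Writing $M \cong \bigoplus_{i=1}^r A(-d_i)$ gives $H(M,A;t) = \sum_{i=1}^r t^{d_i}$, and differentiating at $t=1$ shows that $s(M,A) = \sum_{i=1}^r d_i$. Thus the claimed inequality is exactly $\sum_{j=1}^r \deg g_j \geq \sum_{i=1}^r d_i$.

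The key device is the coordinate matrix. Each $g_j$ is homogeneous, so writing $g_j = \sum_i P_{ij} b_i$ produces a square $r \times r$ matrix $P = (P_{ij})$ whose entries are homogeneous with $\deg P_{ij} = \deg g_j - d_i$ (taking $P_{ij}=0$ when no such element exists). Because $\{b_i\}$ is a basis, the relation $\sum_j \lambda_j g_j = 0$ is equivalent to $P\lambda = 0$ for the vector $\lambda = (\lambda_j)$, so the $A$-independence of $g_1, \ldots, g_r$ is precisely the injectivity of $P$ on $A^r$, which over the domain $A$ is equivalent to $\det P \neq 0$. Now every nonzero term $\sgn(\sigma)\prod_i P_{i\sigma(i)}$ of $\det P$ has the same degree $\sum_j \deg g_j - \sum_i d_i$, so $\det P$ is homogeneous of that degree; being a nonzero homogeneous element of a positively graded domain, its degree is $\geq 0$, which yields the inequality at once.

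Finally, equality holds if and only if $\det P$ has degree $0$, that is, if and only if $\det P \in \kk \setminus \{0\}$, equivalently $P \in \GL_r(A)$, and I would identify this with the generation condition. If $P$ is invertible over $A$, the adjugate formula lets me solve $b_i = \sum_j (P^{-1})_{ji} g_j$, so the $g_j$ generate $M$. Conversely, if the $g_j$ generate $M$, I can write $b_i = \sum_j Q_{ij} g_j$ for a matrix $Q$ over $A$; substituting into $g_j = \sum_i P_{ij} b_i$ and using the $A$-independence of the $g_j$ forces the product of the two coordinate matrices to be the identity, so $P$ is invertible over $A$ and $\det P$ is a unit, giving equality. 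The computations here are routine, and I expect the only point needing care to be the structural observation that regularity is exactly what makes $A$ a domain graded in non-negative degrees — the sign in the inequality rests entirely on $\det P$ being forced into non-negative degree — so the main (modest) obstacle is marshalling these facts cleanly rather than any hard estimate.
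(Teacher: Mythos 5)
Your proposal is correct, and it takes a genuinely different route from the paper. The paper also works with a homogeneous basis $f_1,\ldots,f_r$ and the identity $s(M,A)=\sum_j\deg(f_j)$, but it proves the stronger, sorted componentwise statement $\deg(g_j)\geq\deg(f_j)$ for all $j$: assuming this fails, it gets $k+1$ of the $g$'s lying in $A(f_1,\ldots,f_k)$, forms the resulting $k\times(k+1)$ coordinate matrix, and shows by reverse induction on $t$ that all $t\times t$ minors vanish (each minor appears as a coefficient relation obtained by bordering with a row of $g$'s and using $A$-independence), finally forcing $g_1=\cdots=g_{k+1}=0$, a contradiction; the equality case is then settled by observing that $A(g_1,\ldots,g_r)$ is a submodule of $M$ with the same Hilbert series. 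Your argument instead packages all the degree bookkeeping into the single $r\times r$ coordinate matrix $P$: $A$-independence is equivalent to $\det P\neq 0$ over the domain $A$ (your adjugate/fraction-field reasoning is sound, and regularity with $A_0=\kk$ does make $A$ a positively graded polynomial ring, hence a domain in non-negative degrees), homogeneity of $\det P$ in degree $\sum_j\deg(g_j)-s(M,A)$ gives the inequality instantly, and equality is identified with $\det P\in\kk\setminus\{0\}$, i.e.\ $P\in\GL_r(A)$, which handles both directions of the generation criterion uniformly via the adjugate formula (in the converse direction, the identity $QP^{\mathsf{T}}=I$ follows already from the basis property of the $b_i$, so invoking $A$-independence there is harmless but not needed). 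What each approach buys: the paper's minor-vanishing induction yields the finer intermediate fact that after sorting, the degrees of any $A$-independent set of size $r$ dominate those of a free basis termwise, while your determinant argument is shorter, avoids the double induction entirely, and replaces the Hilbert-series step with an explicit invertibility criterion; both ultimately rest on the same underlying fact that more than $k$ elements of a rank-$k$ free module over a domain cannot be $A$-independent.
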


\begin{proof} Let $f_1, f_2, \ldots, f_r$ be a generating set of $M$.  We may assume without loss of generality that 
\[\deg(f_1) \leq \deg(f_2) \leq \ldots \leq \deg(f_r)\] and also
\[\deg(g_1) \leq \deg(g_2) \leq \ldots \leq \deg(g_r).\] As $M$ is free, we have
\[\frac{H(M,t)}{H(A,t)} = \sum_{j=1}^r t^{\deg(f_j)}\] and $s(M,A) = \sum_{j=1}^r \deg(f_j) $.

We claim that $\deg(g_j) \geq \deg(f_j)$ for all $j \leq r$. Suppose the contrary. Then for some $k \leq r-1$ we must have $\deg(g_{k+1})< \deg(f_{k+1})$ and consequently
\[g_j \in A(f_1, \ldots, f_k)\] for all $j = 1, \ldots, k+1$. 
Write $g_j = \sum_{i=1}^k a_{ij} f_i$. Let $A$ be the $k \times( k+1)$ matrix with elements \[ (a_{ij}: i=1, \ldots, k, j = 1, \ldots, k+1.)\]
We will prove by reverse induction on $t$ that all $t \times t$ minors of $A$ are zero. The initial case is $t=k$. Form the matrix $A'$ by adding an extra row $\{g_1, \ldots, g_{k+1}\}$. Now the matrix $A'$ is

\[\begin{pmatrix} g_1 & g_2 & \ldots & g_{k+1} \\ a_{11} & a_{12} & \ldots & a_{1 k+1} \\
\vdots & \vdots & & \vdots \\ a_{k1} & a_{k2} & \ldots & a_{k k+1} \end{pmatrix}\] 

The determinant of this matrix is a $A$-linear combination of $f_1, \ldots, f_{k}$, and the coefficent of $f_i$ is

\[\det \begin{pmatrix} a_{i1} & a_{i2} & \ldots & a_{i k+1} \\ a_{11} & a_{12} & \ldots & a_{1 k+1} \\
\vdots & \vdots & & \vdots \\ a_{k1} & a_{k2} & \ldots & a_{k k+1} \end{pmatrix}\] which is zero because it has a repeated row. We thus obtain a relation
\[\sum_{j=1}^{k+1} (-1)^j g_j A_j = 0\] where $A_j$ is the $k \times k$ minor of $A$ obtained by deleting the $j$th column. Since $g_1, g_2, \ldots, g_{k+1}$ are linearly independent over $A$, we get that $A_j = 0$ as required.

Now suppose that all $(t+1) \times (t+1)$ minors of $A$ are zero, where $t<k$. Consider a $t \times t$ submatrix
\[B:= (a_{p_i q_j}: i,j = 1, \ldots, t).\] Pick any $q_{t+1} \in \{1, \ldots, k+1\} \setminus \{q_1, q_2, \ldots, q_t\}$ and consider the determinant of the matrix 

\[B':= \begin{pmatrix} g_{q_1} & g_{q_2} & \ldots & g_{q_{t+1}} \\ a_{p_1q_1} & a_{p_1q_2} & \ldots & a_{p_1 q_{t+1}} \\
\vdots & \vdots & & \vdots \\ a_{p_t q_1} & a_{p_t q_2} & \ldots & a_{p_t q_{t+1}} \end{pmatrix}\] 

This is again an $A$-linear combination of $f_1, \ldots, f_k$, and the coefficient of $f_i$ is

\[\det \begin{pmatrix} a_{i q_1} & a_{i q_2} & \ldots & a_{i q_{t+1}} \\ a_{p_1q_1} & a_{p_1q_2} & \ldots & a_{p_1 q_{t+1}} \\
\vdots & \vdots & & \vdots \\ a_{p_t q_1} & a_{p_t q_2} & \ldots & a_{p_t q_{t+1}}. \end{pmatrix}\] 
If $i \not \in \{p_1, \ldots, p_t\}$ then this is a $t+1 \times t+1$ minor of $A$, and zero by inductive hypothesis. Otherwise it is again zero, being the determinant of a matrix with a repeated row. We thus obtain
\[0 = \sum_{j=1}^{t+1} (-1)^j g_{q_j} B_{j}\] where $B_{j}$ is the $t \times t$ minor of $B'$ obtained by removing the $j$th column. Since $g_{q_1}, \ldots, g_{q_t}$ are linearly independent we obtain $B_{j} = 0$ for all $j=1, \ldots, t$, in particular, \[\det(B) = B_t = 0.\]

This completes the induction. Taking $t=1$, we see that $a_{ij} =0$ for all $i = 1, \ldots, k$ and $j = 1, \ldots, k+1$. Therefore 
\[g_1 = g_2 = \ldots g_{k+1} = 0\] which contradicts linear independence.

This completes the proof of our claim, and we may conclude that $\sum_{j=1}^r \deg(g_j) \geq \sum_{j=1}^r \deg(f_j).$
Further, if $\sum_{j=1}^r \deg(g_j) = \sum_{j=1}^r \deg(f_j)$ then we must have $\deg(g_j) = \deg(f_j)$ for all $j=1, \ldots, r$, and $A(g_1, g_2, \ldots, g_r)$ is a submodule of $M$ with the same Hilbert series, and therefore $g_1, g_2, \ldots, g_r$ are free generators of $M$. On the other hand, if $\sum_{j=1}^r \deg(g_j) > \sum_{j=1}^r \deg(f_j)$ then $\deg(g_j)> \deg(f_j)$ for some $j $, and $A(g_1, g_2, \ldots, g_r)$ is a proper submodule of $M$.
 \end{proof}

\section{Characterising covariants}

In this section we will show how modules of covariants can often be viewed as the kernel of a certain homomorphism. First, let $G$ denote a finite group and $\kk$ a field of arbitary characteristic. Let $H \leq G$. Suppose $G$ acts on a $\kk$-vector space $V$ and let $T$ be a left-transveral of $H$ in $G$. We can assume $\iota \in T$ where $\iota$ is the identity element in $G$. Let $\{e_t: t \in T\}$ be a basis for the left permutation module $\kk(G/H)$ on which $H$ acts trivially. We define a map

\[\Theta: \kk[V]^H \rightarrow \kk[V] \otimes \kk(G/H)\]
\[\Theta(f) = \sum_{t \in T} tf \otimes e_t.\]
where $G$ acts diagonally on the tensor product.

Clearly $\Theta$ is an injective, degree-preserving homomorphsim of  $\kk[V]^G$ modules. Further we claim:
\begin{prop} $\Theta$ induces an isomorphism  of graded $\kk[V]^G$-modules $$\kk[V]^H \cong (\kk[V] \otimes \kk(G/H))^G.$$
\end{prop}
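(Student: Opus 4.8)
The map $\Theta$ has already been observed to be injective, degree-preserving and $\kk[V]^G$-linear, so the whole task is to identify its image with the full invariant submodule $(\kk[V] \otimes \kk(G/H))^G$. I would split this into two inclusions: first that $\Theta$ lands inside the invariants, and then the converse, that every invariant is hit. The recurring tool in both directions is the remark that, since $f \in \kk[V]^H$, the element $tf$ depends only on the coset $tH$: if $t' = th$ with $h \in H$ then $t'f = t(hf) = tf$. Thus the sum $\sum_{t \in T} tf \otimes e_t$ is genuinely indexed by $G/H$ and is independent of the chosen transversal, which is what makes the reindexing arguments below legitimate.

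For the first inclusion I would fix $g \in G$ and compute $g\Theta(f) = \sum_{t \in T} (gt)f \otimes g e_t$. The diagonal action carries $e_t$ to $e_{t'}$, where $t' \in T$ is the representative of the coset $gtH$; writing $gt = t'h$ with $h \in H$ and invoking $H$-invariance gives $(gt)f = t'f$. Since left translation by $g$ permutes the cosets, as $t$ runs over $T$ the representative $t'$ also runs over $T$, so after reindexing one obtains $g\Theta(f) = \sum_{t' \in T} t'f \otimes e_{t'} = \Theta(f)$. Hence $\Theta(f)$ is $G$-invariant.

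The crux is surjectivity. Given an arbitrary invariant $x = \sum_{t \in T} f_t \otimes e_t$ with $f_t \in \kk[V]$, I would single out the coefficient $f := f_\iota$ attached to the identity coset $H$. Since each $h \in H$ fixes the coset $H$, and hence $e_\iota$, comparing coefficients of $e_\iota$ in $hx = x$ gives $hf = f$, so $f \in \kk[V]^H$. Next, for each $t \in T$ the element $g = t^{-1}$ sends the coset $tH$ to $H$, i.e.\ carries $e_t$ to $e_\iota$, so the coefficient of $e_\iota$ in $t^{-1}x$ is $t^{-1}f_t$; invariance equates this with $f_\iota = f$, forcing $f_t = tf$. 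Therefore $x = \sum_{t \in T} tf \otimes e_t = \Theta(f)$, so the image is all of $(\kk[V] \otimes \kk(G/H))^G$. Together with injectivity, degree preservation and $\kk[V]^G$-linearity, this yields the asserted isomorphism of graded $\kk[V]^G$-modules.

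I expect the only real difficulty to be notational bookkeeping rather than conceptual: one must track carefully which transversal representative indexes the coset $gtH$ under the permutation action, and the attendant reindexing of the sums. Once the identity $(gt)f = t'f$ is in hand and the coset-permutation is seen to be a bijection of $T$, both inclusions drop out of a routine comparison of the coefficients of the $e_t$.
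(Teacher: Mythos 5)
Your proof is correct and takes essentially the same route as the paper's: invariance of $\Theta(f)$ via the permutation of $T$ induced by left translation on cosets, and surjectivity by comparing coefficients of the $e_t$ in $gx = x$ to deduce $f_t = t f_\iota$. The only differences are cosmetic and welcome: you verify explicitly that $f_\iota \in \kk[V]^H$ (which the paper leaves implicit in the relation $g f_t = f_{\sigma_g(t)}$) and you apply $g = t^{-1}$ at the coefficient of $e_\iota$ where the paper applies $g = t$ via $\sigma_t(\iota) = t$.
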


\begin{proof} We show first that if $f \in \kk[V]^H$ then $\Theta(f) \in (\kk[V] \otimes \kk(G/H))^G$. Let $g \in G$, then we have

$$g \cdot \Theta(f) = \sum_{t \in T} gt f \otimes ge_t.$$

Write $gt = \sigma_g(t) h_g$, where $\sigma_g(t) \in T$ and $h_g \in H$, noting that $\sigma_g$ is a permutation of $T$. Then since $f \in \kk[V]^H$ and $H$ acts trivially on $\kk(G/H)$ we have
\[ \sum_{t \in T} gt f \otimes ge_t = \sum_{t \in T} \sigma_g(t) h_g f \otimes h_g e_{\sigma_g(t)} = \sum_{t \in T} \sigma_g(t) f \otimes e_{\sigma_g(t)} = \Theta(f)\] as required

 Now we need only show that $\Theta$ is surjective. Let $$f = \sum_{t \in T}^r f_t \otimes e_t \in (\kk[V]\otimes \kk(G/H))^G$$ where $f_t \in \kk[V]$. Again for any $g \in G$ write $gt = \sigma_g(t)h_g.$ Thus since $f = gf$ we have
$$f = \sum_{t \in T} gf_t \otimes e_{\sigma_g(t)}= \sum_{t \in T} f_t \otimes e_t.$$
Equating coefficients of $e_t$ shows that
\[gf_t = f_{\sigma_g(t)}\] for any $g \in G$ and $t \in T$. In particular, if $t \in T$ then the permutation $\sigma_t$ satisfies $\sigma_t(\iota) = t$. Thus for any $t \in T$ we have
\[t f_{\iota} = f_t\] and hence 
\[f =  \sum_{t \in T}^r f_t \otimes e_t = \sum_{t \in T} tf_{\iota} \otimes e_t = \Theta(f_{\iota})\] which shows that $\Theta$ is surjective as required.
\end{proof}

Now let $M$ be a submodule of $\kk(G/H)$. Tensor the short exact sequence
\[0 \rightarrow M \stackrel{i}{\rightarrow} \kk(G/H) \stackrel{j}{\rightarrow} N \rightarrow 0, \] where $N = \ker(i)$ with $\kk[V]$ and take $G$-invariants. This induces a long exact sequence
\[0 \rightarrow (\kk[V] \otimes M)^G \rightarrow (\kk[V] \otimes \kk(G/H))^G \rightarrow (\kk[V] \otimes N)^G \rightarrow H^1(G,\kk[V] \otimes M) \rightarrow \ldots \]

By composing the isomorphism $\Theta$ with the second nontrivial map in this sequence, we obtain a graded $\kk[V]^G$ homomorphism 
$$\phi: \kk[V]^H \rightarrow (\kk[V] \otimes N)^G$$
whose kernel is isomorphic to the module of covariants $(\kk[V] \otimes M)^G = \kk[V,M]^G$.

We will consider two special cases of the above: first, suppose $M = M_{G/H} = \langle e_t - e_\iota: t \in T\rangle_{\kk}$. Then $N$ is a one-dimensional module on which $G$ acts trivially and the map $j$ sends each $e_t$ to its basis element, which we will denote by $e$. The induced map in the long exact sequence sends an expression $$\sum_{t \in T} f_t \otimes e_t$$ to $$\sum_{t \in T} f_t \otimes e.$$ Thus, the composition with $\Theta$ sends an element $f \in \kk[V]^H$ to $\sum_{t \in T} tf$ - in other words, this is the \emph{relative transfer map} $\Tr^G_H: \kk[V]^H \rightarrow \kk[V]^G$. It follows that the kernel of the relative transfer map is isomorphic, as a graded $\kk[V]^G$-module, to the module of covariants $\kk[V,M_{G/H}]^G$. This is interesting, but we will not pursue it further in the present article. We note for future use that the (relative) transfer map is a useful way of producing invariants. Along similar lines, one has a \emph{relative norm map} $N^G_H:\kk[V]^H \rightarrow \kk[V]^G$ defined by
\[N^G_H(f) = \prod_{t \in T} tf.\]

Now suppose $G$ is cyclic of order $q = p^r$, $\kk$ is a field of characteristic $p$ and $H$ is trivial. Let $g$ be a generator of $G$ and set $\Delta:= g-\iota \in \kk G$. For each $1 \leq n < q$ there is a submodule $\ker(\Delta^n)$ of $\kk G$ isomorphic to $V_n$.  The image of $\Delta^n$ is a submodule isomorphic to $V_{q-n}$. 

Let $M$ and $N$ denote the kernel and image of $\Delta^n$ respectively. Then the canonical short exact sequence induces a long exact sequence

\[0 \rightarrow (\kk[V] \otimes M)^G \rightarrow (\kk[V] \otimes \kk(G))^G \stackrel{\psi}{\rightarrow} (\kk[V] \otimes N)^G \rightarrow H^1(G,\kk[V] \otimes M) \rightarrow \ldots \]

Consider the composition 
\[\kk[V] \stackrel{\Theta}{\rightarrow} (\kk[V] \otimes \kk G)^G \stackrel{\psi}{\rightarrow} (\kk[V] \otimes N)^G \stackrel{k} {\rightarrow} (\kk[V] \otimes \kk G)^G  \stackrel{\Theta^{-1}}{\rightarrow } \kk[V]\]
Here $k$ is the canonical inclusion.  Choose a basis $\{e_i: i=0, \ldots q-1\}$ for $\kk G$ with $ge_i = e_{i+1}$ for $i<q$ and $ge_{q-1} = e_0$.
For $n=1$, the composition $k \psi$ sends an element 
\[\sum_{i=0}^{q-1} f_i \otimes e_i \] to
\[\sum_{i=0}^{q-1} f_i \otimes (e_{i+1}-e_i) = \sum_{i=0}^{q-1} (f_{i-1}-f_{i}) \otimes e_i \] with indices understood modulo $q$. Thus the composition $k \psi \Theta$ sends $f \in \kk[V]$ to 
\[\sum_{i=0}^{q-1} (g^{i-1} f - g^{i} f) \otimes e_i.\] Composing this with $\Theta^{-1}$ then picks out the coefficient of $e_0$, which is 
\[g^{-1} f  - f.\] Thus, the composition is action by $g^{-1}- \iota \in \kk G$. An easy inductive argument shows that for arbitrary $n$, the composition is action by $(g^{-1} - \iota)^n \in \kk G$. Since $\Theta$ is an isomorphism and $k$ is injective, the kernel of this map (which is the same as the kernel of $\Delta^n$) can be identified with the kernel of $\psi$, which in turn is isomorphic to $\kk[V,M]^G$. Thus, we obtain a degree-preserving isomorphism of $\kk[V]^G$ modules

\[\Xi:\ker(\Delta^n) \cong \kk[V,V_n]^G.\]

Note that a version of this isomorphism was used in \cite{ElmerCov} for $G$ a cyclic group of prime order. As noted there, if we choose a basis $\{w_1,w-2, \ldots, w_n\}$ of $V_n$ such that
\begin{align*} \sigma w_1 &=w_1\\
\sigma w_2&=w_2-w_{1}\\
\sigma w_3 &= w_2-w_2+w_1\\
\vdots & \\
\sigma w_n &=w_n-w_{n-1}+w_{n-2}-\ldots \pm w_1. \end{align*}
then $\Xi$ is given by the particularly convenient formula
\[\Xi(f) = \sum_{i=0}^n \Delta^i(f)w_i.\]

\section{Main results}

From now on let $G = \langle \sigma \rangle$ be a cyclic group of order $q = p^k$, let $\kk$ be a field of characteristic $p$ and let $V$ and $W$ be finite-dimensional $\kk G$-modules.

The operator $\Delta = \sigma-\iota \in \kk G$ will play a major role in our exposition, so we begin with some general results, following \cite{ElmerCov} quite closely. Notice that, for $\phi \in \kk[V,W]^G$ we have
\[\Delta(\phi) = 0 \Rightarrow \sigma \cdot \phi = \phi\] and thus by induction $\sigma^k \phi = \phi$ for all $k$. So $\Delta(\phi)=0$ if and only if $\phi \in \kk[V,W]^G$. Similarly for $f \in \kk[V]$ we have $\Delta(f)=0$ if and only if $f \in \kk[V]^G$. 

$\Delta$ is a \emph{$\sigma$-twisted derivation} on $\kk[V]$; that is, it satisfies the formula

\begin{equation}\label{derivation} \Delta(fg) = f\Delta(g)+\Delta(f)\sigma(g)\end{equation} for all $f, g \in \kk[V]$.  

Further, using induction and the fact that $\sigma$ and $\Delta$ commute, one can show $\Delta$ satisfies a Leibniz-type rule

\begin{equation}\label{Leibniz}
\Delta^k(fg) = \sum_{i=0}^k \begin{pmatrix} k\\i \end{pmatrix} \Delta^i( f) \sigma^{k-i}( \Delta^{k-i} (g)).
\end{equation}

A further result, which can be deduced from the above and proved by induction is the rule for differentiating powers:

\begin{equation}\label{powerrule}
\Delta(f^k) = \Delta(f)\left( \sum_{i=0}^{k-1} f^i \sigma(f)^{k-1-i} \right)
\end{equation}
for any $k \geq 1$.

For any $f \in \kk[V]$ we define the \textbf{weight} of $f$:
\[\wt(f) = \min\{i>0: \Delta^i(f) = 0\}.\]
Notice that $\Delta^{\wt(f)-1}(f) \in \ker(\Delta) = \kk[V]^G$ for all $f \in \kk[V]$. 
Another consequence of  (\ref{Leibniz}) is the following: let $f, g \in \kk[V]$ and set $d = \wt(f), e = \wt(g)$. Suppose that
\[d+e-1 \leq p.\] Then
\[\Delta^{d+e-1}(fg) = \sum_{i=0}^{d+e-1}\begin{pmatrix} d+e-1\\i \end{pmatrix}\Delta^i(f)\sigma^{d+e-1-i}(\Delta^{d+e-1-i}(g)) = 0\]
since if $i<e$ then $d+e-1-i>d-1$. On the other hand
\begin{align*}
\Delta^{d+e-2}(fg) &= \sum_{i=0}^{d+e-2}\begin{pmatrix} d+e-2\\i \end{pmatrix}\Delta^i(f)\sigma^{d+e-2-i}(\Delta^{d+e-2-i}(g)) \\
 &= \begin{pmatrix} d+e-2\\i \end{pmatrix}\Delta^{d-1}(f)\sigma^{e-1}(\Delta^{e-1}(g)) \neq 0 \end{align*}
since $\begin{pmatrix} d+e-2\\i \end{pmatrix} \neq 0 \mod p$. We obtain the followng:

\begin{prop}\label{weightproduct} Let $f, g \in \kk[V]$ with $\wt(f)+\wt(g)-1 \leq p$. Then $\wt(fg) = \wt(f)+\wt(g)-1$.
\end{prop}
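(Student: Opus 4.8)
The plan is to apply the Leibniz-type rule~(\ref{Leibniz}) directly, since $\wt$ is defined purely through iterated application of $\Delta$. Write $d = \wt(f)$ and $e = \wt(g)$, so that $\Delta^{d-1}(f) \neq 0$, $\Delta^{d}(f) = 0$, $\Delta^{e-1}(g) \neq 0$ and $\Delta^{e}(g) = 0$. To prove $\wt(fg) = d+e-1$ I would establish two facts: that $\Delta^{d+e-1}(fg) = 0$ (whence $\wt(fg) \leq d+e-1$), and that $\Delta^{d+e-2}(fg) \neq 0$ (whence $\wt(fg) > d+e-2$, i.e.\ $\wt(fg) \geq d+e-1$).

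First I would expand $\Delta^{d+e-1}(fg)$ using~(\ref{Leibniz}). In the summand indexed by $i$ the two derivative factors are $\Delta^{i}(f)$ and $\Delta^{d+e-1-i}(g)$. If $i \geq d$ the first factor is zero; if $i \leq d-1$ then $d+e-1-i \geq e$, so the second factor is zero. No index $i$ makes both factors survive, so every summand vanishes and $\Delta^{d+e-1}(fg) = 0$.

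Next I would expand $\Delta^{d+e-2}(fg)$ the same way. A summand indexed by $i$ can be nonzero only if simultaneously $i \leq d-1$ (so $\Delta^{i}(f) \neq 0$) and $d+e-2-i \leq e-1$, i.e.\ $i \geq d-1$. Hence the only possibly-nonzero term is $i = d-1$, namely
\[\binom{d+e-2}{d-1}\,\Delta^{d-1}(f)\,\sigma^{e-1}\!\bigl(\Delta^{e-1}(g)\bigr).\]
The two derivative factors are nonzero by the definition of weight, $\sigma^{e-1}$ is invertible, and $\kk[V]$ is an integral domain, so the polynomial part $\Delta^{d-1}(f)\,\sigma^{e-1}(\Delta^{e-1}(g))$ is nonzero.

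The main (indeed only) obstacle is confirming that the scalar $\binom{d+e-2}{d-1}$ is nonzero in $\kk$, and this is exactly where the hypothesis enters: $\wt(f)+\wt(g)-1 \leq p$ forces $d+e-2 \leq p-1 < p$, and any binomial coefficient $\binom{m}{j}$ with $m < p$ is a unit modulo $p$ (the factorials in $m!/(j!\,(m-j)!)$ contain no factor divisible by $p$; equivalently, apply Lucas' theorem). Thus the surviving term is a nonzero scalar times a nonzero polynomial, giving $\Delta^{d+e-2}(fg) \neq 0$ and completing the argument. This is essentially the computation already displayed immediately before the statement, so I do not anticipate any hidden difficulty.
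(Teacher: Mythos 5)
Your proof is correct and takes essentially the same approach as the paper, whose proof is precisely the displayed computation preceding the statement: expand $\Delta^{d+e-1}(fg)$ and $\Delta^{d+e-2}(fg)$ via the Leibniz rule \eqref{Leibniz}, note that every term of the former vanishes while only the $i=d-1$ term of the latter survives, and use $d+e-2 < p$ to conclude $\binom{d+e-2}{d-1} \neq 0$ in $\kk$. Your explicit justifications --- invertibility of $\sigma^{e-1}$, integrality of $\kk[V]$, and the unit status of the binomial coefficient --- simply spell out what the paper leaves tacit (and tacitly fix its typo $\binom{d+e-2}{i}$, which should read $\binom{d+e-2}{d-1}$).
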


Also note that
\[\Delta^q = \sigma^q-1=0\] which shows that $\wt(f) \leq q$ for all $f \in \kk[V]^G$. Finally notice that
\begin{equation}\label{trans} \Delta^{q-1} = \sum_{i=0}^{q-1} \sigma^i \end{equation} which shows that $\Delta^{q-1}(f) = \Tr^G(f)$ for all $f \in \kk[V]$.

Now we assume that $\codim(V^G) \leq 2$, so $V$ is isomorphic to one of the modules listed in Proposition \ref{listmodules}. Our goal is to find, for arbitrary indecomposable $W \cong V_n$, an explicit set of covariants generating $\kk[V,W]^G$ freely over $A$, where $A$ is a homogeneous system of parameters for $\kk[V]^G$. Our strategy is to find in each case an $A$-independent set of $r(K_n,A)$ elements of $K_n:= \ker(\Delta^n)$ whose degree sum equals $s(K_n,A)$. That such a set generates $K_n$ freely over $A$ follows from Proposition \ref{freetest}, and the results of Section 3 imply that applying $\Xi$ to each element in the set yields a free generating set for $\kk[V,W]^G$ over $A$.

Broer and Chuai studied the $s$-invariant for modules of covariants over $\kk[V]^G$ for arbitrary $G$. In particular they proved:

\begin{prop}\label{nopseudoref} Suppose $G$ is a finite group and let $H$ be the subgroup of $G$ generated by all pseudo-reflections (i.e. elements stabilising a subspace of $V$ with codimension 1). Then $s(\kk[V,W]^G,\kk[V]^G) = s(\kk[V,W]^H,\kk[V]^H)$. In particular if $G$ contains no pseudo-reflections, then $s(\kk[V,W]^G,\kk[V]^G) =0$.
\end{prop}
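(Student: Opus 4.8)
The plan is to reduce to a quotient group that contains no pseudo-reflections and then to show that passing to invariants under such a group leaves the $s$-invariant unchanged. Write $R = \kv$, $R^G = \kvg$ and $R^H = \kk[V]^H$, and set $M_G = \kk[V,W]^G$ and $M_H = \kk[V,W]^H = (R \otimes W)^H$. Since conjugation by any $g \in G$ carries a pseudo-reflection to a pseudo-reflection, the set of all pseudo-reflections is stable under conjugation, so $H \trianglelefteq G$. Consequently $\bar G := G/H$ acts on the normal graded domain $R^H$ with $(R^H)^{\bar G} = R^G$ and $(M_H)^{\bar G} = M_G$; moreover $\Quot(R^H)/\Quot(R^G)$ is Galois with group $\bar G$ (as $H$ is normal in the Galois group $G$ of $\Quot(R)/\Quot(R^G)$), so $\bar G$ acts faithfully. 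I also record that $M_H$, being the invariants of the free $R$-module $R \otimes W$ under a finite group acting on a normal domain, is a reflexive $R^H$-module.

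The geometric heart of the reduction is the claim that $\bar G$ acts on $\spec(R^H)$ without pseudo-reflections, equivalently that $\spec(R^H) \to \spec(R^G)$ is unramified in codimension one. This follows from the standard fact, valid in all characteristics, that the branch locus of $\spec(R) \to \spec(R^G)$ in codimension one is exactly the union of the hyperplanes fixed by pseudo-reflections of $G$. Each such hyperplane is already a branch divisor of $\spec(R) \to \spec(R^H)$, since the corresponding pseudo-reflections lie in $H$; hence all codimension-one ramification is absorbed by the intermediate map and $\spec(R^H) \to \spec(R^G)$ carries none.

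It then remains to prove: if a finite group $\Gamma$ acts faithfully on a graded normal domain $S$ (with $S_0 = \kk$) without pseudo-reflections in codimension one, and $P$ is a finitely generated $\Gamma$-equivariant graded reflexive $S$-module, then $s(P^\Gamma, S^\Gamma) = s(P, S)$; applying this with $\Gamma = \bar G$, $S = R^H$ and $P = M_H$ yields the Proposition. The key observation is that $s(M,A)$ depends only on the two leading Laurent coefficients $c_0(\cdot), c_1(\cdot)$ of $H(M,t)$ and $H(A,t)$ at $t=1$ (the terms of orders $(1-t)^{-n}$ and $(1-t)^{-(n-1)}$, where $n = \dim S$), through the universal expression $s(M,A) = \bigl(c_0(M)c_1(A) - c_1(M)c_0(A)\bigr)/c_0(A)^2$. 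I would show that because $S^\Gamma \subseteq S$ is finite of generic degree $|\Gamma|$ and unramified in codimension one, one has $c_i(S) = |\Gamma|\,c_i(S^\Gamma)$ and $c_i(P) = |\Gamma|\,c_i(P^\Gamma)$ for $i = 0,1$: the leading coefficient scales by the degree of the cover, and the subleading coefficient, which is a codimension-one invariant of a reflexive module, scales the same way precisely because the Riemann--Hurwitz-type correction is a sum over ramification divisors and that sum is empty here. Substituting these four relations into the formula makes the factors $|\Gamma|$ cancel, giving $s(P^\Gamma, S^\Gamma) = s(P,S)$. Finally, the ``in particular'' statement is the case $H = \{\iota\}$: then $M_H = R \otimes W$ and $R^H = R$, so $H(M_H,t) = \dim_\kk(W)\, H(R,t)$ and therefore $s(\kk[V,W]^H,\kk[V]^H) = s(R\otimes W, R) = 0$.

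The main obstacle is the subleading-coefficient computation. Over $\CC$ one can simply read it off from Molien's formula, in which pseudo-reflections are visibly the only group elements contributing a pole of order $n-1$ at $t=1$; in the modular case no such formula is available and $\spec(R) \to \spec(R^G)$ may be wildly ramified, so the comparison of the subleading Hilbert coefficients must be carried out directly. What rescues the argument is exactly that the no-pseudo-reflection hypothesis eliminates the ramification divisors altogether, so the delicate wild contributions never appear; the step requiring the most care is the rigorous verification that $c_1$ is a genuinely codimension-one invariant for reflexive modules and hence scales by $|\Gamma|$ under a cover that is unramified in codimension one.
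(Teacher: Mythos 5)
The paper gives no proof of this proposition at all: it is quoted as a known result of Broer and Chuai \cite{BroerChuaiRelative}, so your attempt can only be measured against that source, whose ramification-theoretic strategy yours in fact mirrors. Much of your outline is sound: $H \trianglelefteq G$ because conjugation permutes pseudo-reflections; the reduction to $\bar G = G/H$ acting on $R^H$ with $(R^H)^{\bar G} = R^G$ and $(M_H)^{\bar G} = M_G$; the identification of the codimension-one branch locus of $\spec(R) \to \spec(R^G)$ with the reflecting hyperplanes (the inertia group of a height-one prime is the pointwise stabilizer of its zero set, and a linear map fixing a hypersurface pointwise fixes a hyperplane, in any characteristic); the formula $s(M,A) = \bigl(c_0(M)c_1(A) - c_1(M)c_0(A)\bigr)/c_0(A)^2$, which checks out against the definition $s(M,A) = a_1$; and the ``in particular'' clause, since for $H = \{\iota\}$ one has $H(R \otimes W, t)/H(R,t) = \dim_\kk(W)$, a constant. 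Two points are used silently and should be made explicit: that the inertia group of $\mathfrak{q} \subset R^H$ in $\bar G$ is the \emph{image} of the inertia group of $\mathfrak{P} \subset R$ in $G$ (the tower property, which is what ``absorbed by the intermediate map'' amounts to), and that trivial inertia means unramified \emph{with separable residue extension}, which is exactly what ``\'etale in codimension one'' requires in the wild setting.

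The genuine gap is precisely where you point it yourself: the assertion that $c_1(P) = |\Gamma|\,c_1(P^\Gamma)$ for a reflexive equivariant module under a quotient unramified in codimension one is not bookkeeping --- it is essentially the entire content of the proposition, and ``the Riemann--Hurwitz-type correction is a sum over ramification divisors and that sum is empty'' describes the expected answer rather than proving it; in the modular case there is no Molien-type formula to fall back on. The claim is true and your plan is closable, but the missing pieces are substantive: (i) Galois descent in codimension one, i.e.\ trivial inertia at every height-one prime makes $S_{\mathfrak{p}}/(S^\Gamma)_{\mathfrak{p}}$ a Galois \'etale cover, so the counit $S \otimes_{S^\Gamma} P^\Gamma \to P$ is an isomorphism off codimension two; (ii) the fact that a map of finitely generated graded modules whose kernel and cokernel have dimension at most $n-2$ preserves the two leading Laurent coefficients --- this is the rigorous sense in which $c_0, c_1$ are ``codimension-one invariants,'' and it is easy; but (iii) one must still relate $c_i(S \otimes_{S^\Gamma} N)$ to $c_i(N)$, and this is not formal: $S$ is neither flat nor free over $S^\Gamma$ in general and $S^\Gamma$ need not be regular, so you cannot simply tensor a finite free resolution. (Flatness does hold in codimension one, since localizations of the normal ring $S^\Gamma$ at height-one primes are DVRs, which kills the $\mathrm{Tor}$ corrections there; converting that local statement into one about global Hilbert series is exactly the work Broer and Chuai carry out via their analysis of reflexive modules at height-one primes.) As written, your proposal is a correct proof architecture with its central lemma asserted rather than proved.
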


This makes computation of $(\kk[V,W]^G,A)$ using Proposition \ref{ssubalg} quite easy when $G$ is a cyclic $p$-group containing no pseudo-reflections. This holds when $V = V_3$ and $p>2$ or $V = V_2 \oplus V_2$. The only cases remaining in which the ring of invariants is Cohen-Macaulay are $V=V_2$ and $V = V_3, p=2$. In the former case our methods do not yield any substantial improvement, so we will simply quote the results from \cite{ElmerCov} for the sake of completeness:

\begin{prop} Let $p \geq n$, $V=V_2$ and $K_n = \ker(\Delta^n)$.
$K_n$ is a free $\kk[V^G]$-module, generated by $\{x_1^k: k=0, \ldots, n-1\}$.
\end{prop}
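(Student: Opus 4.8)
The plan is to work in explicit coordinates and reduce everything to a triangularity computation, bypassing the $s$-invariant machinery. Write $\kv = \kk[x_1,x_2]$ for $V = V_2$, where $x_2$ spans the invariant line and $\sigma(x_1) = x_1 + x_2$, so that $\Delta(x_2) = 0$ and $\Delta(x_1) = x_2$; in particular $\wt(x_1) = 2$. The norm $N = \prod_{i=0}^{p-1}\sigma^i(x_1) = x_1^p - x_2^{p-1}x_1$ is an invariant of degree $p$, and $A := \kvg = \kk[x_2,N]$ is a polynomial ring over which $\kv$ is free with basis $\{x_1^j : 0 \le j \le p-1\}$ (reduce higher powers using $x_1^p = x_2^{p-1}x_1 + N$). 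Since $A = \ker(\Delta)$ consists of $\sigma$-invariants, the twisted derivation rule (\ref{derivation}) gives $\Delta(ag) = a\Delta(g)$ for $a \in A$, so $\Delta$, and hence each $\Delta^n$, is an $A$-linear endomorphism of this free module.

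First I would dispose of independence: as $p \ge n$, the set $\{x_1^k : 0 \le k \le n-1\}$ is a subset of the $A$-basis above, hence automatically $A$-independent, and each lies in $K_n = \ker(\Delta^n)$ because $\wt(x_1^k) = k+1 \le n$ by Proposition \ref{weightproduct}. The substance is therefore generation. Here the key computation is $\Delta(x_1^j) = (x_1+x_2)^j - x_1^j = \sum_{i=0}^{j-1}\binom{j}{i}x_2^{j-i}x_1^i$, whose top term in $x_1$-degree is $j\,x_2\,x_1^{j-1}$. For $1 \le j \le p-1$ the coefficient $j$ is a unit mod $p$, so $\Delta$ is strictly lower-triangular on the basis, lowering $x_1$-degree by exactly one with nonzero leading coefficient. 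Iterating, for $n \le k \le p-1$ one gets $\Delta^n(x_1^k) = c_k x_1^{k-n} + (\text{lower } x_1\text{-degree terms})$ with $c_k = k(k-1)\cdots(k-n+1)\,x_2^n$; the falling factorial is a product of $n$ consecutive integers lying in $[1,p-1]$, hence nonzero mod $p$, so $c_k \neq 0$ in $A$.

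To finish, take any $f \in K_n$ and write $f = \sum_{k=0}^{p-1} a_k x_1^k$ with $a_k \in A$. By $A$-linearity, $0 = \Delta^n(f) = \sum_{k=n}^{p-1} a_k\,\Delta^n(x_1^k)$, the terms with $k < n$ vanishing. Since the elements $\Delta^n(x_1^k)$ for $k = n,\dots,p-1$ have pairwise distinct leading $x_1$-degrees $0,1,\dots,p-1-n$ and nonzero leading coefficients $c_k$, a standard triangular argument forces $a_n = \dots = a_{p-1} = 0$; thus $f \in \sum_{k=0}^{n-1} A x_1^k$, proving that $\{x_1^k : 0 \le k \le n-1\}$ generates $K_n$. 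Together with independence this exhibits a free basis. I expect the only delicate point to be the non-vanishing of the leading coefficients $c_k$ through the $n$-fold iteration, which is exactly where the hypothesis $p \ge n$ enters; this is also why I would avoid the Proposition \ref{freetest} route here, since $\sigma$ is a pseudo-reflection on $V_2$ and Proposition \ref{nopseudoref} then yields no usable value for $s(K_n,A)$, forcing an ad hoc Hilbert-series computation instead.
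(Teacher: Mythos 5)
Your proof is correct, and it takes a genuinely different route from the paper --- indeed, the paper gives no proof of this proposition at all: it quotes the result from \cite{ElmerCov} ``for the sake of completeness'', explicitly because its own method yields no improvement here, and your closing remark diagnoses exactly why: $\sigma$ acts on $V_2$ as a pseudo-reflection (its fixed space has codimension $1$), so Proposition \ref{nopseudoref} returns the tautology $H=G$, and Proposition \ref{freetest} cannot be applied without an independent computation of $s(K_n,A)$, which is the Hilbert-series work the original argument in \cite{ElmerCov} had to carry out. Your triangularity argument replaces all of that with a direct kernel calculation: since $\kv=\kk[x_1,x_2]$ is free over $A=\kvg=\kk[x_2,N]$ with basis $\{x_1^j: 0\le j\le p-1\}$ and $\Delta$ is $A$-linear (by the twisted derivation rule with $\Delta(a)=0$ for $a\in A$), the identity $\Delta^n(x_1^k)=k(k-1)\cdots(k-n+1)\,x_2^n x_1^{k-n}+(\text{lower }x_1\text{-degree terms})$, with the falling factorial nonzero mod $p$ because its $n$ factors lie in $[1,p-1]$ when $n\le k\le p-1$, kills the coefficients $a_{p-1},\dots,a_n$ one by one. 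The one step worth spelling out is the ``standard triangular argument'' at the end: the coefficients $a_k\in A$ involve $N$ and hence carry $x_1$-degree $p$ per power of $N$, so one cannot compare raw $x_1$-degrees in $\kk[x_1,x_2]$ naively; the comparison must be made via the unique expansion in the free $A$-basis $\{x_1^j\}_{j<p}$, which is legitimate precisely because each $\Delta^n(x_1^k)$ has coefficients in $\kk[x_2]\subset A$ and $x_1$-degree $k-n<p$ (alternatively, one can track leading $x_1$-degrees modulo $p$, since the residues $k-n$ are distinct in $[0,p-1]$). What your route buys is a short, self-contained proof with no Hilbert series and no $s$-invariant; what the paper's (quoted) route buys is uniformity with the remaining cases, where Proposition \ref{freetest} genuinely does the work. (A minor point you silently and correctly fixed: the statement's ``$\kk[V^G]$'' must be read as $\kk[V]^G=A$; over the literal coordinate ring $\kk[V^G]=\kk[x_2]$ the module $K_n$ is not even finitely generated, as already $K_1=\kk[x_2,N]$ shows.)
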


\begin{Corollary}\label{mainv2}
Let $p \geq n$, $W = V_n$ and $V = V_2$. The module of covariants $\kk[V,W]^G$ is generated freely over $\kk[V]^G$ by 
\[\{\Xi(x_1^k): k = 0, \ldots, n-1\}.\]
\end{Corollary}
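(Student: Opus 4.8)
The plan is to obtain Corollary \ref{mainv2} as an immediate consequence of the quoted Proposition together with the module isomorphism $\Xi$ constructed in Section 3. Recall that for $W = V_n$ that section produced a degree-preserving isomorphism of graded $\kk[V]^G$-modules
\[\Xi: \ker(\Delta^n) \cong \kk[V,V_n]^G,\]
and that for $V = V_2$, which is faithful only when $|G| = p$, the ring $\kk[V]^G$ is the polynomial ring on the invariant coordinate together with the norm of the remaining coordinate, hence regular. Thus we may take $A = \kk[V]^G$ itself as the (trivial) homogeneous system of parameters, and the whole question reduces to understanding $K_n = \ker(\Delta^n)$.

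First I would invoke the Proposition, which asserts that $K_n$ is free over $\kk[V]^G$ with basis $\{x_1^k : k = 0, \ldots, n-1\}$. Before transporting this across $\Xi$, one checks that each $x_1^k$ really lies in the domain $\ker(\Delta^n)$: since the coordinate $x_1$ has weight $2$, Proposition \ref{weightproduct} gives $\wt(x_1^k) = k+1 \leq n$ (using $p \geq n$), so that $\Delta^n(x_1^k) = 0$ and $\Xi(x_1^k)$ is defined. Now a bijective $\kk[V]^G$-linear map carries a free generating set to a free generating set: both $\kk[V]^G$-independence and the spanning property are preserved under an isomorphism of $\kk[V]^G$-modules. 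Applying this to the basis $\{x_1^k\}$ and recalling $W = V_n$ yields exactly the claim that $\{\Xi(x_1^k): k = 0, \ldots, n-1\}$ generates $\kk[V,W]^G$ freely over $\kk[V]^G$.

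I do not expect any genuine obstacle internal to the Corollary; all of the substance sits in its two inputs, namely the explicit structure of $K_n$ supplied by the Proposition of \cite{ElmerCov} and the construction of $\Xi$ in Section 3. The only point needing a moment's care is the verification that $x_1^k \in \ker(\Delta^n)$ so that $\Xi$ applies, which is the weight computation above. In particular, unlike the cases treated via Proposition \ref{freetest}, here no degree-sum or Hilbert-series comparison is required, which is precisely why the statement is packaged as a corollary rather than proved from scratch.
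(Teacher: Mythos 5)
Your proposal is correct and matches the paper's (implicit) argument exactly: the paper states Corollary \ref{mainv2} with no separate proof precisely because it follows immediately from the quoted proposition on $K_n = \ker(\Delta^n)$ together with the degree-preserving $\kk[V]^G$-module isomorphism $\Xi: \ker(\Delta^n) \cong \kk[V,V_n]^G$ from Section 3, which is the route you take. Your extra weight computation verifying $x_1^k \in \ker(\Delta^n)$ is harmless but redundant, since membership in $K_n$ is already part of the proposition's assertion that these elements generate it.
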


The case $V = V_3$, $p=2$ will be dealt with in the final subsection.
If $V = V_3$ and $p>2$ or $V = V_2 \oplus V_2$ then we have the following:

\begin{Lemma}\label{rsnoref}
Let $p \geq n$ and let $W = V_n$. Let $A \subseteq \kk[V]^G$ be a subalgebra generated by a homogeneous system of parameters. Then $\kk[V,W]^G$ is a free $A$-module and
\[r(\kk[V,W]^G,A) = r(\kk[V]^G,A)n; \]
\[s(\kk[V,W]^G,A) = s(\kk[V]^G,A)n.\] 
\end{Lemma}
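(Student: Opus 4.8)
The plan is to deduce all three assertions from results already available: freeness from Proposition \ref{broerchuaimain}, the vanishing of the relative $s$-invariant from Proposition \ref{nopseudoref}, and the transitivity formulas of Proposition \ref{ssubalg}. In both cases under consideration $V^G$ is one-dimensional when $V = V_3$ and two-dimensional when $V = V_2 \oplus V_2$, so $\codim(V^G) = 2$ in each case. Hence Proposition \ref{broerchuaimain} shows that $\kk[V,W]^G$ is a Cohen--Macaulay $\kk[V]^G$-module, which is exactly the statement that it is free over the polynomial subalgebra $A$ generated by a homogeneous system of parameters. This settles freeness, and reduces the two numerical claims to computing $r(\kk[V,W]^G,\kk[V]^G)$ and $s(\kk[V,W]^G,\kk[V]^G)$ and substituting them into Proposition \ref{ssubalg} for the tower $A \subseteq \kk[V]^G$.

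For the $s$-invariant I would first check that $G$ contains no pseudo-reflections. By Proposition \ref{listmodules} we have $|G| = p$ in both of these cases, so every nonidentity element is a generator $\sigma^j$ and has the same fixed space as $\sigma$, namely $V^G$, of codimension $2$. Thus no nonidentity element fixes a hyperplane, and Proposition \ref{nopseudoref} immediately gives $s(\kk[V,W]^G,\kk[V]^G) = 0$.

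The main step is the rank computation $r(\kk[V,W]^G,\kk[V]^G) = n$. The quickest route is Galois descent at the level of fraction fields: since $V$ is faithful, $\kk(V)$ is Galois over $\kk(V)^G$ with group $G$, and the normal basis theorem gives $\kk(V) \cong \kk(V)^G \otimes_\kk \kk G$ as $\kk(V)^G$-modules with $G$-action. Tensoring with $W$ and using the standard isomorphism $\kk G \otimes_\kk W \cong (\kk G)^{\oplus n}$ yields
\[ (\kk(V) \otimes_\kk W)^G \cong \kk(V)^G \otimes_\kk (\kk G \otimes_\kk W)^G \cong (\kk(V)^G)^{\oplus n}, \]
so the generic rank of $\kk[V,W]^G$ over $\kk[V]^G$ is $\dim W = n$; equivalently, one may compute the rank of $K_n = \ker(\Delta^n)$ over $\kk[V]^G$ via the degree-preserving isomorphism $\Xi$ of Section 3. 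With $r(\kk[V,W]^G,\kk[V]^G) = n$ and $s(\kk[V,W]^G,\kk[V]^G) = 0$ in hand, Proposition \ref{ssubalg}(a) gives $r(\kk[V,W]^G,A) = n\, r(\kk[V]^G,A)$, and part (b) gives $s(\kk[V,W]^G,A) = n\, s(\kk[V]^G,A) + r(\kk[V]^G,A)\cdot 0 = n\, s(\kk[V]^G,A)$, as required.

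I expect the rank computation to be the only nonformal step. The point requiring a little care is the identification of the Hilbert-series rank with the generic rank $\dim_{\kk(V)^G}\bigl(\kk[V,W]^G \otimes_{\kk[V]^G} \kk(V)^G\bigr)$; this is valid because localizing $\kk[V]$ at the nonzero invariants already produces $\kk(V)$ (every nonzero $f$ divides its norm $\prod_{g} gf \in \kk[V]^G$) and localization commutes with taking $G$-invariants. Everything else is a direct application of the quoted propositions, so the argument is short.
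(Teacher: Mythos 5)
Your proof is correct and takes essentially the same route as the paper's: the paper's one-line proof likewise combines Proposition \ref{ssubalg} with Proposition \ref{nopseudoref} and the absence of pseudo-reflections in $V$, taking freeness (via Proposition \ref{broerchuaimain}) and the generic rank $r(\kk[V,W]^G,\kk[V]^G)=n$ as understood. Your normal-basis-theorem computation of that rank and your remark identifying the Hilbert-series rank with the generic rank simply fill in details the paper leaves implicit.
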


\begin{proof}
The first result follows from Proposition \ref{ssubalg}; the second from the same, Proposition \ref{nopseudoref} and the fact that $V$ contains no pseudo-reflections.
\end{proof}

Note that an application of $\Xi$ now shows that
\[r(K_n,A) = r(\kk[V]^G,A)n; \]
\[s(K_n,A) = s(\kk[V]^G,A)n.\] 

We now consider these cases separately in detail.
\subsection{$V = V_3, p>2$}
In this subsection let $V=V_3$, $W = V_n$ and $p$ an odd prime. Then $G$ is cyclic of order $p$. Choose a basis $v_1,v_2, v_3$ of $V$ so that
\begin{align*}\sigma x_1 &= x_1+x_2\\
\sigma x_2 &= x_2+x_3\\
\sigma x_3 &= x_3 \end{align*}
where $x_1,x_2,x_3$ is the corresponding dual basis.

We begin by describing $\kk[V]^G$. This has been done in several places before, for example \cite{DicksonMadison} and \cite[Theorem~5.8]{Peskin}, but we will follow \cite{ElmerCov}. We use a graded lexicographic order on monomials $\kk[V]$ with $x_1>x_2>x_3$. Here and in the next two subsections, if $f \in \kk[V]$ then the {\it lead term} of $f$ is the term with the largest monomial in our order and the {\it lead monomial} is the corresponding monomial.

 It is easily shown that
\begin{align*} a_1 &:= x_3,\\ a_2 &:= x_2^2-2x_1x_3-x_2x_3,\\ a_3 &:= N^G(x_1)=\prod_{i=0}^{p-1} \sigma^i(x_1) \end{align*}
are invariants, and looking at their lead terms tells us that they form a homogeneous system of parameters for $\kk[V]^G$, with degrees $1,2$ and $p$. 

\begin{prop}\label{gensv3} Let $f \in \kk[V]^G$ be any invariant with lead term $x_2^p$. Let $A = \kk[a_1,a_2,a_3]$. Then $\kk[V]^G$ is a free $A$-module, whose generators are $1$ and $f$.
\end{prop}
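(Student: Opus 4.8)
The plan is to use Proposition \ref{freetest} directly. We are given a regular graded algebra $A = \kk[a_1,a_2,a_3]$ (a polynomial ring, since $a_1,a_2,a_3$ form a homogeneous system of parameters and hence are algebraically independent), and the module $M = \kk[V]^G$, which is a free $A$-module by Proposition \ref{broerchuaimain} applied with $W$ trivial (since $\codim(V^G)\le 2$). To apply \ref{freetest}, I would first compute $r(\kk[V]^G, A)$ and $s(\kk[V]^G, A)$, then exhibit an $A$-independent set of the right size whose degree sum matches the $s$-invariant.

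First I would compute the rank and $s$-invariant. Since $A \subseteq \kk[V]^G$ with $A$ polynomial on generators of degrees $1,2,p$, and since $\kk[V]^G$ itself is the subring of invariants, I can use Proposition \ref{ssubalg} to relate invariants over $A$ to invariants of $\kk[V]$ over $A$, or more directly compute from Hilbert series. Alternatively, because $V_3$ with $p>2$ contains no pseudo-reflections, Proposition \ref{nopseudoref} gives $s(\kk[V]^G,\kk[V]^G)=0$ trivially, but what I actually need is $s(\kk[V]^G, A)$. The cleanest route is to observe that the rank $r(\kk[V]^G,A)$ equals $\dim_{\Quot(A)}\kk[V]^G$, which I expect to be $2$ from the geometry: $\kk[V]^G$ is a degree-$2p/(1\cdot 2\cdot p)$... more carefully, $r = [\Quot(\kk[V]^G):\Quot(A)]$, and since $\trdeg = 3$ on both sides this is a finite extension whose degree I can read off from Hilbert series. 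I would establish $r = 2$ and compute $s(\kk[V]^G,A)$ as an explicit number.

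Next I would verify that $\{1, f\}$ is an $A$-independent set of size $r=2$ and that $\deg(1)+\deg(f) = 0 + \deg(f)$ equals $s(\kk[V]^G,A)$. For $A$-independence: a relation $a + bf = 0$ with $a,b \in A$ forces $b=0$ by comparing against the fact that $f$ has lead monomial $x_2^p$, which cannot arise from $A$ since the generators $a_1,a_2,a_3$ have lead monomials $x_3$, $x_2^2$, $x_1^p$ respectively, and no product of these has lead monomial $x_2^p$; hence $f \notin \Quot(A)$, giving independence. The key degree computation is then $\deg(f) = p = s(\kk[V]^G,A)$, which I would confirm against the rank/$s$-invariant values obtained above. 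Granting this, Proposition \ref{freetest} immediately yields that $\{1,f\}$ generates $\kk[V]^G$ freely over $A$.

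\textbf{The main obstacle} I anticipate is the bookkeeping in the Hilbert-series computation establishing both $r(\kk[V]^G,A)=2$ and $s(\kk[V]^G,A)=p$; once these two numbers are in hand, the rest of the argument — $A$-independence of $\{1,f\}$ via lead monomials and the degree match $0+p = p$ — is routine. One subtlety worth checking is that such an invariant $f$ with lead term $x_2^p$ actually exists and lies in $\kk[V]^G$; I would either cite its construction from \cite{ElmerCov} or exhibit it (for instance via a norm or a suitable $\Delta$-weight-$p$ element), but since the statement hypothesizes such an $f$, I may simply take its existence as given and focus on the freeness argument.
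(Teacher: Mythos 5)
Your proposal is correct in substance, but it takes a genuinely different route from the paper: the paper gives no in-text argument at all for Proposition \ref{gensv3}, simply citing \cite[Proposition~12]{ElmerCov}, whereas you give a self-contained proof via Proposition \ref{freetest}. This is very much in the spirit of the paper (which advertises exactly this technique as a way to circumvent Hilbert-series computations for covariants), and all your key steps check out: freeness of $\kk[V]^G$ over $A$ follows from Proposition \ref{broerchuaimain} with $W$ trivial since $\codim(V^G)=2$, and your lead-monomial argument for $A$-independence of $\{1,f\}$ is sound, because every element of $A$ has lead monomial $x_1^{pi}x_2^{2j}x_3^k$ (the lead monomials of distinct monomials in $a_1,a_2,a_3$ are pairwise distinct, so no cancellation occurs), while $bf$ with $0 \neq b \in A$ has lead monomial with odd $x_2$-exponent. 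The one point you leave as ``bookkeeping'' deserves care, since in the paper the values $r(\kk[V]^G,A)=2$ and $s(\kk[V]^G,A)=p$ (Lemma \ref{randsofkv3g}) are stated as \emph{consequences} of Proposition \ref{gensv3}; using them uncritically would be circular. A non-circular computation is available entirely within the paper's toolkit: by Section 3, $\kk[V] \cong \kk[V,\kk G]^G$ as graded $\kk[V]^G$-modules, and since $V_3$ with $p>2$ contains no pseudo-reflections, Proposition \ref{nopseudoref} gives $s(\kk[V],\kk[V]^G)=0$. Directly from Hilbert series, $H(\kk[V],t)/H(A,t) = (1+t)(1+t+\cdots+t^{p-1})$, whence $r(\kk[V],A)=2p$ and $s(\kk[V],A)=p^2$; combined with $r(\kk[V],\kk[V]^G)=|G|=p$ and Proposition \ref{ssubalg} applied to the chain $A \subset \kk[V]^G$, one gets $r(\kk[V]^G,A) = 2p/p = 2$ and $p^2 = p\cdot s(\kk[V]^G,A) + 2\cdot 0$, so $s(\kk[V]^G,A)=p$. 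With these in hand, your degree match $0+\deg(f)=p$ and Proposition \ref{freetest} complete the proof. What your approach buys is a uniform, citation-free argument consistent with the rest of the paper; what the citation buys is brevity, at the cost of importing the (longer, Hilbert-series-based) proof of \cite{ElmerCov}.
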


\begin{proof} See \cite[Proposition~12]{ElmerCov}
\end{proof}

The obvious choice of invariant with lead term $x_2^p$ is $N^G(x_2)$. 
The following observations are consequences of the generating set above.

\begin{Lemma}\label{obs} 
Let $f \in A$. Then the lead term of $f$ is of the form $x_1^{pi}x_2^{2j}x_3^k$ for some positive integers $i,j,k$.
\end{Lemma}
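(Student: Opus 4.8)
The plan is to reduce the statement to a fact about the lead monomials of the generators $a_1,a_2,a_3$ together with the multiplicativity of the monomial order. First I would record that, since $a_1,a_2,a_3$ form a homogeneous system of parameters in the three-dimensional space $V$, they are algebraically independent, so $A=\kk[a_1,a_2,a_3]$ is a genuine polynomial ring and every $f\in A$ can be written as $f=\sum_{i,j,k}c_{ijk}\,a_3^{\,i}a_2^{\,j}a_1^{\,k}$ with $c_{ijk}\in\kk$ and only finitely many nonzero. This is where the earlier HSOP claim enters; in fact I only need that some such expression exists.

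Next I would pin down the lead monomials of the generators in the chosen order. As was already used to verify that they form a system of parameters, we have
\[\operatorname{LM}(a_1)=x_3,\qquad \operatorname{LM}(a_2)=x_2^2,\qquad \operatorname{LM}(a_3)=x_1^p,\]
each with a nonzero lead coefficient (the coefficient of $x_2^2$ in $a_2$ is $1$, and $a_3=N^G(x_1)$ is a product of linear forms each having lead term $x_1$). Because a monomial order is multiplicative and $\kk$ is a field, lead monomials and lead coefficients multiply, so for every triple
\[\operatorname{LM}\!\left(a_3^{\,i}a_2^{\,j}a_1^{\,k}\right)=x_1^{pi}x_2^{2j}x_3^{k},\]
again with nonzero lead coefficient.

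The heart of the argument is then a no-cancellation observation. The assignment $(i,j,k)\mapsto x_1^{pi}x_2^{2j}x_3^{k}$ is injective, since $pi,2j,k$ determine $i,j,k$. Hence the nonzero terms of $f$ have pairwise distinct lead monomials, so the $\le$-largest of these cannot be cancelled by any other term: if $(i_0,j_0,k_0)$ is the triple with $c_{i_0j_0k_0}\neq0$ maximising $x_1^{pi}x_2^{2j}x_3^{k}$, then the coefficient of this monomial in $f$ equals $c_{i_0j_0k_0}$ times a nonzero constant, while no strictly larger monomial appears anywhere in $f$. Therefore $\operatorname{LM}(f)=x_1^{pi_0}x_2^{2j_0}x_3^{k_0}$, which is of the required shape.

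I expect the only genuinely delicate point to be this last injectivity/no-cancellation step: one must ensure that the lead terms of the distinct powers $a_3^{\,i}a_2^{\,j}a_1^{\,k}$ do not collide and conspire to kill the apparent leading monomial, and this is precisely what injectivity of $(i,j,k)\mapsto(pi,2j,k)$ guarantees. I would also note in passing that the exponents $i,j,k$ should be allowed to be non-negative rather than strictly positive, since constants lie in $A$; the word ``positive'' in the statement is a harmless slip.
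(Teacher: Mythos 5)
Your argument is correct and is precisely the reasoning the paper leaves implicit: Lemma \ref{obs} is stated without proof as a consequence of $A=\kk[a_1,a_2,a_3]$ having generators with lead monomials $x_3$, $x_2^2$, $x_1^p$, together with multiplicativity of lead monomials and the injectivity of $(i,j,k)\mapsto(pi,2j,k)$, and you are also right that ``positive'' should read ``non-negative''. One caveat you inherit from the paper rather than introduce yourself: under a literal graded \emph{lexicographic} order with $x_1>x_2>x_3$ one has $x_1x_3>x_2^2$, so $\operatorname{LM}(a_2)=x_1x_3$ and the lemma itself would fail; the computations here (and in the cited earlier work) are consistent only with the graded \emph{reverse} lexicographic order, under which your claim $\operatorname{LM}(a_2)=x_2^2$ is correct and the rest of your no-cancellation argument goes through verbatim.
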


\begin{Lemma}\label{randsofkv3g} We have $r(\kk[V]^G,A) = 2$ and $s(\kk[V]^G,A) = p$.
\end{Lemma}

\begin{Corollary}
We have $r(\kk[V,W]^G, A) = 2n$ and $s(\kk[V,W]^G,A) =np$.
\end{Corollary}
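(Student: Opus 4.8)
The plan is to obtain this corollary as an immediate consequence of the two preceding lemmas, since all the substantive work has already been done upstream. First I would invoke Lemma \ref{rsnoref}: we are in the subsection $V = V_3$, $p > 2$, which is precisely one of the cases for which that lemma is stated, and $A = \kk[a_1, a_2, a_3]$ is by construction a subalgebra generated by a homogeneous system of parameters. The lemma therefore applies directly and yields the product formulas
\[
r(\kk[V,W]^G, A) = r(\kk[V]^G, A)\, n, \qquad s(\kk[V,W]^G, A) = s(\kk[V]^G, A)\, n.
\]

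Second, I would substitute the values computed in Lemma \ref{randsofkv3g}, namely $r(\kk[V]^G, A) = 2$ and $s(\kk[V]^G, A) = p$. This gives $r(\kk[V,W]^G, A) = 2n$ and $s(\kk[V,W]^G, A) = np$, as claimed, completing the proof.

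There is essentially no obstacle here; the content lies entirely in the earlier results, and the corollary is a clean substitution. The only points worth verifying are that the hypotheses of Lemma \ref{rsnoref} are genuinely met --- that $V_3$ contains no pseudo-reflections (so that Proposition \ref{nopseudoref} applies and forces the $s$-invariant to multiply cleanly) and that we are operating in the range $p \geq n$ dictated by the representation theory of the cyclic group of order $p$ --- but both are already secured by the surrounding setup. As an alternative to citing Lemma \ref{randsofkv3g}, one could reconstruct $r(\kk[V]^G, A) = 2$ and $s(\kk[V]^G, A) = p$ directly from Proposition \ref{gensv3}, since $\kk[V]^G$ is free over $A$ on the two generators $1$ and $f$ with $\deg(f) = p$, giving rank $2$ and degree sum $0 + p = p$; but routing through Lemma \ref{randsofkv3g} keeps the argument shortest.
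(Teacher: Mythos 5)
Your proposal is correct and matches the paper's own proof, which likewise cites Lemma \ref{randsofkv3g} and Lemma \ref{rsnoref} and substitutes $r(\kk[V]^G,A)=2$, $s(\kk[V]^G,A)=p$ into the product formulas; you have merely written out the substitution and hypothesis-checking explicitly. No gaps, and nothing genuinely different in route.
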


\begin{proof} This follows immediately from Lemma \ref{randsofkv3g} and Lemma \ref{rsnoref}.
\end{proof}

For the rest of this section, we set $l = \frac12 n$ if $n$ is even, with $l = \frac12(n-1)$ if $n$ is odd. Next, we need some information about the lead monomials of certain polynomials. The following pair of lemmas were proved in \cite{ElmerCov}:

\begin{Lemma}\label{monomialsx_1^ismall} Let $j \leq k < p$. Then $\Delta^j(x_1^k)$ has lead term $$\frac{k!}{(k-j)!}x_1^{k-j}x_2^j.$$
\end{Lemma}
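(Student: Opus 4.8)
The plan is to induct on $j$, reducing everything to a precise understanding of how a single application of $\Delta$ moves the lead monomial under the fixed order. The base case $j=1$ is exactly equation~(\ref{powerrule}): $\Delta(x_1^k) = x_2\sum_{i=0}^{k-1}x_1^i(x_1+x_2)^{k-1-i}$ has lead term $k\,x_1^{k-1}x_2$. For the inductive step I first record a single-monomial computation. For any monomial $m = x_1^ax_2^bx_3^c$ with $a\geq 1$, writing $\Delta(m) = \sigma(m)-m$ with $\sigma(m) = (x_1+x_2)^a(x_2+x_3)^bx_3^c$ expands as a sum of the monomials $x_1^{a-i}x_2^{i+b-j'}x_3^{j'+c}$ with coefficient $\binom{a}{i}\binom{b}{j'}$, over $(i,j')\neq(0,0)$. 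Among these the order selects the one minimising the $x_3$-exponent (forcing $j'=0$) and then maximising the $x_1$-exponent (forcing $i=1$), namely $x_1^{a-1}x_2^{b+1}x_3^c$ with coefficient $a$; every other monomial of $\Delta(m)$ is strictly smaller. Thus, on monomials with positive $x_1$-degree, $\Delta$ sends $m$ to a polynomial whose lead monomial $\delta(m)$ is obtained from $m$ by the shift $(a,b,c)\mapsto(a-1,b+1,c)$, with lead coefficient $a$.

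The engine of the induction is that this shift is \emph{uniform}. Because the fixed order is a monomial order it is compatible with multiplication, and for monomials with $a\geq 1$ we have the identity $\delta(m)\cdot x_1 = m\cdot x_2$. Hence for any two such monomials $m,m'$, compatibility and cancellation give $m<m' \iff m\,x_2 < m'\,x_2 \iff \delta(m)\,x_1 < \delta(m')\,x_1 \iff \delta(m)<\delta(m')$; that is, $\delta$ is strictly order-preserving.

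Now set $P=\Delta^{j-1}(x_1^k)$, whose lead term is $\tfrac{k!}{(k-j+1)!}\,x_1^{k-j+1}x_2^{j-1}$ by the inductive hypothesis; its lead monomial $m_0$ has $x_1$-exponent $k-j+1\geq 1$ (using $j\leq k$) and $x_3$-exponent $0$. Applying $\Delta$, the lead term contributes $\delta(m_0)=x_1^{k-j}x_2^j$ with coefficient $\tfrac{k!}{(k-j+1)!}(k-j+1)=\tfrac{k!}{(k-j)!}$. It remains to check that no other term of $\Delta(P)$ reaches $\delta(m_0)$. Terms of $P$ with positive $x_1$-exponent are controlled by the monotonicity of $\delta$: each such monomial $m'<m_0$ yields $\delta(m')<\delta(m_0)$, and by the single-monomial computation all of $\Delta(m')$ lies at or below $\delta(m')$. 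Terms of $P$ with zero $x_1$-exponent are pure powers $x_2^{b'}x_3^{c'}$, and $\Delta$ sends these to polynomials all of whose monomials carry $x_3$-exponent $\geq 1$; since the target $\delta(m_0)=x_1^{k-j}x_2^j$ is $x_3$-free, every such monomial is strictly smaller. So $\delta(m_0)$ is hit by a single term with no cancellation, completing the step. The coefficient survives modulo $p$ precisely because $k<p$: the accumulated factor $\tfrac{k!}{(k-j)!}=k(k-1)\cdots(k-j+1)$ is a product of residues in $\{1,\dots,p-1\}$, hence nonzero in $\kk$.

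The main obstacle is the possible interference of subleading terms: $\Delta$ can lower the $x_1$-degree by more than one (already $\Delta(x_1^2)=2x_1x_2+x_2^2$ produces an $x_1$-free term), so a priori some lower monomial could be differentiated into one that overtakes or cancels the intended lead term. The two facts that rule this out are exactly the ones above -- the uniform shift making $\delta$ order-preserving on $x_1$-positive monomials, and the creation of a positive $x_3$-exponent (hence strict descent below the $x_3$-free target) for $x_1$-free monomials. Everything else is routine bookkeeping.
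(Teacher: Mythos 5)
Your proof is correct and follows the same overall skeleton as the paper's argument (which appears in \cite{ElmerCov}): induction on $j$, tracking the lead term of $\Delta^{j-1}(x_1^k)$ and showing one application of $\Delta$ multiplies the lead coefficient by $k-j+1$ while nothing from the tail can overtake or cancel it. The engines differ, though. The paper differentiates the tracked lead monomial $x_1^{k-j}x_2^j$ as a product, using the twisted derivation rule $\Delta(fg)=f\Delta(g)+\Delta(f)\sigma(g)$ together with the power rule to see that $\Delta(x_2^j)$ is divisible by $x_3$, and disposes of the remainder $g$ with the single coarse fact that $\Delta$ strictly lowers lead monomials. You instead prove a uniform monomial-level statement: expanding $\Delta(m)=\sigma(m)-m$ by binomials, the largest surviving monomial of $\Delta(x_1^ax_2^bx_3^c)$ (for $a\geq 1$) is the shift $(a,b,c)\mapsto(a-1,b+1,c)$ with coefficient $a$, and this shift $\delta$ is strictly order-preserving via the clean identity $\delta(m)\,x_1=m\,x_2$ and compatibility of the order with multiplication. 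That buys you a self-contained argument needing neither the Leibniz nor the power rule, and it isolates exactly why no subleading term can interfere (monotonicity of $\delta$ for $x_1$-positive monomials; forced positive $x_3$-exponent for $x_1$-free ones, which is the same $x_3$-divisibility phenomenon the paper extracts from the power rule). Your explicit remark that the coefficient $k(k-1)\cdots(k-j+1)$ is nonzero because $k<p$ is also a point the paper leaves implicit.

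One point deserves flagging: your selection rule for the lead monomial (minimise the $x_3$-exponent first, then maximise the $x_1$-exponent) is the graded \emph{reverse} lexicographic tie-breaking, not graded lex with $x_1>x_2>x_3$ as the paper literally declares. Under literal graded lex the lemma is actually false: $\Delta^2(x_1^2)=2x_1x_3+2x_2^2+4x_2x_3+x_3^2$ would have lead term $2x_1x_3$, not $2x_2^2$. The grevlex reading is the one the paper genuinely uses elsewhere as well --- Lemma \ref{obs} takes the lead term of $a_2=x_2^2-2x_1x_3-x_2x_3$ to be $x_2^2$, which again requires $x_2^2>x_1x_3$. So your implicit choice of order is the correct one for the lemma; just state it explicitly, since a reader holding you to the paper's announced order would reject the single-monomial computation at the first step.
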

\begin{Lemma}\label{monomialsx_1^ix_2} Let $j \leq k < p$. Then $\Delta^j(x_1^kx_2)$ has lead term $$\frac{k!}{(k-j)!}x_1^{k-j}x_2^{j+1}.$$
\end{Lemma}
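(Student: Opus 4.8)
The plan is to reduce everything to Lemma \ref{monomialsx_1^ismall} by expanding $\Delta^j(x_1^kx_2)$ with the Leibniz-type rule (\ref{Leibniz}). Taking $f = x_1^k$ and $g = x_2$ there, and recalling that $\Delta(x_2) = x_3$ while $\Delta^2(x_2) = \Delta(x_3) = 0$, every summand in which $\Delta^{j-i}(x_2)$ has order $j-i \geq 2$ vanishes. Hence only the terms $i = j$ and $i = j-1$ survive, and using $\sigma(x_3) = x_3$ together with $\binom{j}{j-1} = j$ I would obtain
\[\Delta^j(x_1^kx_2) = \Delta^j(x_1^k)\,x_2 + j\,\Delta^{j-1}(x_1^k)\,x_3.\]

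Next I would feed in Lemma \ref{monomialsx_1^ismall}, which applies to both $\Delta^j(x_1^k)$ and $\Delta^{j-1}(x_1^k)$ since $j-1 \leq j \leq k < p$. This shows that the first summand has lead term
\[\frac{k!}{(k-j)!}\,x_1^{k-j}x_2^{j+1},\]
while the second has lead term
\[\frac{j\,k!}{(k-j+1)!}\,x_1^{k-j+1}x_2^{j-1}x_3.\]
Both candidate monomials have degree $k+1$, so to pin down the leading monomial of the whole expression I must compare these two in the monomial order.

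The key step --- and the only mildly delicate one --- is exactly this comparison, and I expect it to be where the argument must be handled with care. The candidate $x_1^{k-j}x_2^{j+1}$ is free of $x_3$, whereas $x_1^{k-j+1}x_2^{j-1}x_3$ carries a factor of $x_3$; in the order used to prove Lemma \ref{monomialsx_1^ismall} the $x_3$-free monomial is the larger, which is the very comparison that singled out $x_1^{k-j}x_2^j$ there over its $x_3$-divisible competitors. Thus $x_1^{k-j}x_2^{j+1}$ exceeds the lead term of the second summand, and since every monomial of that summand is divisible by $x_3$, none of them can equal or exceed $x_1^{k-j}x_2^{j+1}$. Consequently this monomial survives uncancelled with coefficient $\tfrac{k!}{(k-j)!}$, which is nonzero modulo $p$ precisely because $k < p$, yielding the claimed lead term. (An alternative would be a direct induction on $j$ mirroring the proof of Lemma \ref{monomialsx_1^ismall}, but routing through that lemma via (\ref{Leibniz}) is shorter.)
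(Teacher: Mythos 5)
Your proof is correct and is essentially the paper's own argument: the Leibniz rule (\ref{Leibniz}) collapses to $\Delta^j(x_1^kx_2)=\Delta^j(x_1^k)\,x_2+j\,\Delta^{j-1}(x_1^k)\,x_3$ since $\Delta^2(x_2)=0$ and $\sigma(x_3)=x_3$, after which Lemma \ref{monomialsx_1^ismall} and the observation that the second summand contributes only $x_3$-divisible (hence smaller, and in particular non-cancelling) monomials yield the stated lead term with coefficient $\frac{k!}{(k-j)!}\neq 0$ as $k<p$. Your careful treatment of the comparison between $x_1^{k-j}x_2^{j+1}$ and $x_1^{k-j+1}x_2^{j-1}x_3$ is exactly the step the paper compresses into the phrase ``smaller terms''.
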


We are now ready to state our main results. Let $V=V_3$ and $W=V_n$. For any $i = 0,1, \ldots, n-1$ we define monomials
\[M_i =  \left\{ \begin{array}{lr} x_1^{i/2} & \text{if $i$ is even,}\\ x_1^{(i-1)/2}x_2 & \text{if $i$ is odd}.\end{array} \right.\]
and polynomials 
\[P_i =  \left\{ \begin{array}{lr} \Delta(x_1^{p-i/2}) & \text{if $i$ is even, $i>0$,}\\ x_1^{p-(i+1)/2} & \text{if $i$ is odd}.\end{array} \right.\] with $P_0 = x_1^{p-1}x_2$.

\begin{Theorem}
\label{kngens}
Let $n \leq p$. Then $K_n$ is a free $A$-module, generated by 
\[S_n = \{M_0,M_1, \ldots, M_{n-1}, \Delta^{p-n}(P_0), \Delta^{p-n}(P_1), \ldots, \Delta^{p-n}(P_{n-1})\}.\]
\end{Theorem}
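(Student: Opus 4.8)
The plan is to use Proposition~\ref{freetest}: since $K_n$ is a free $A$-module with $r(K_n,A) = 2n$ and $s(K_n,A) = np$ (by the Corollary following Lemma~\ref{randsofkv3g}), it suffices to show that the proposed set $S_n$ has exactly $2n$ elements, that they are $A$-independent, and that their degrees sum to $np$. The cardinality is immediate: there are $n$ monomials $M_i$ and $n$ polynomials $\Delta^{p-n}(P_i)$, so $|S_n| = 2n = r(K_n,A)$ as required. The bulk of the work will be the degree-sum computation and the independence check.

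First I would verify that every element of $S_n$ actually lies in $K_n = \ker(\Delta^n)$. For the monomials $M_i$, note $\wt(x_1) = 3$ and $\wt(x_2) = 2$ (from the action of $\sigma$), so by Proposition~\ref{weightproduct} each $M_i$ has weight at most $n$ (one checks $\wt(x_1^{i/2}) = 2\cdot(i/2)+1 = i+1 \le n$ for even $i$, and similarly $\wt(x_1^{(i-1)/2}x_2) = i+1 \le n$ for odd $i$), whence $\Delta^n(M_i) = 0$. For the second family, each $P_i$ has a controlled weight, and applying $\Delta^{p-n}$ lowers the weight by $p-n$, so $\Delta^{p-n}(P_i) \in K_n$ provided $\wt(P_i) \le p$; this follows from the weight formulas since $\wt(x_1^{k}) = 2k+1$ and $\wt(\Delta(x_1^k)) = 2k$ for $k<p$.

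Next comes the degree-sum computation. The monomials contribute $\sum_{i=0}^{n-1} \deg(M_i)$; since $\deg(M_i) = \lceil i/2 \rceil \cdot 1 + (\text{a correction for the } x_2)$ — explicitly $\deg(M_i) = i$ pattern should be tracked carefully, as $\deg(x_1) = \deg(x_2) = 1$ gives $\deg(M_i) = \lceil (i+1)/2\rceil$ roughly — I would tabulate these by parity and sum. The polynomials $\Delta^{p-n}(P_i)$ have degrees $\deg(P_i) = (p - \lceil\text{something}\rceil)$; since $\Delta$ preserves degree, $\deg(\Delta^{p-n}(P_i)) = \deg(P_i)$, which is roughly $p - \deg(M_i)$. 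The elements have been engineered precisely so that $\deg(M_i) + \deg(\Delta^{p-n}(P_{\tau(i)})) = p$ for a suitable pairing $\tau$, giving total degree sum $np$. I would make this pairing explicit and confirm the arithmetic case-by-case on parity.

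The main obstacle, and the step requiring genuine care, is the $A$-independence of $S_n$. Here I would exploit the lead-term machinery: using Lemma~\ref{monomialsx_1^ismall} and Lemma~\ref{monomialsx_1^ix_2} I can compute the lead monomial of each $\Delta^{p-n}(P_i)$, and combined with the lead monomials of the $M_i$ and the description of lead terms of elements of $A$ from Lemma~\ref{obs}, I would argue that no nontrivial $A$-combination $\sum_i a_i M_i + \sum_i b_i \Delta^{p-n}(P_i) = 0$ can hold. The key point is that the lead monomials of the $2n$ generators are \emph{distinct} modulo the lattice of lead monomials of $A$ (which by Lemma~\ref{obs} are of the form $x_1^{pi}x_2^{2j}x_3^k$); since multiplication by a lead term of $A$ shifts $x_1$-exponents by multiples of $p$ and $x_2$-exponents by multiples of $2$, and the lead monomials of the $M_i$ (small powers of $x_1, x_2$) and of the $\Delta^{p-n}(P_i)$ (near $x_1^{p-\ast}x_2^\ast$) occupy distinct residue classes, cancellation of the leading terms forces all coefficients $a_i, b_i$ to vanish. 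Once independence and the degree-sum equality $\sum \deg = np = s(K_n,A)$ are both established, Proposition~\ref{freetest} yields immediately that $S_n$ generates $K_n$ freely over $A$, completing the proof.
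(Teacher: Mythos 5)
Your proposal is correct and follows essentially the same route as the paper's own proof: membership in $K_n$ via the weight formula of Proposition~\ref{weightproduct}, the cardinality $2n$ and degree sum $np = s(K_n,A)$ (your pairing $\tau$ is simply the identity, since $\deg(M_i) = \lceil i/2 \rceil$ and $\deg(P_i) = p - \lceil i/2 \rceil$, with $\Delta$ degree-preserving), and $A$-independence by showing the lead monomials computed from Lemmas~\ref{monomialsx_1^ismall} and~\ref{monomialsx_1^ix_2} occupy distinct residue classes of $(x_1\text{-degree} \bmod p,\ x_2\text{-degree} \bmod 2)$, which multiplication by elements of $A$ preserves by Lemma~\ref{obs}, exactly the paper's argument that at most two generators share an $x_1$-degree and then differ in $x_2$-parity, after which Proposition~\ref{freetest} concludes. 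The only blemishes are cosmetic: your description of the lead monomials of $\Delta^{p-n}(P_i)$ as ``near $x_1^{p-\ast}$'' is off (their $x_1$-degrees are at most $n-1$, with the large exponent $p-n$ or $p-n+1$ appearing on $x_2$), and the degree bookkeeping you deferred does close exactly as you anticipated.
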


\begin{proof}
By Lemma \ref{weightproduct}, the weight of $M_i$ is $i+1$ for $i<p$, while the weight of $P_i$ is
\[\left\{\begin{array}{lr} p & i\ \text{odd or zero} \\ p-1 & i\ \text{even, $i>0$.} \end{array} \right.\] Therefore the given polynomials all lie in $K_n$. Further, the degree of $M_i$ is $\lceil \frac{i}{2} \rceil$ and the degree of $P_i$ is $p-\lceil \frac{i}{2} \rceil$, so the sum of the degrees of the elements of $K_n$ is $np = s(K_n,A)$. Therefore by Proposition \ref{freetest} it is enough to prove that $S_n$ is $A$-independent.

Applying Lemmas \ref{monomialsx_1^ismall} and \ref{monomialsx_1^ix_2}, the lead monomials of $S_n$ are
\[\{1,x_2, x_1, x_1x_2, \ldots, x_1^{l-1}x_2, x_1^l,\]\[ x_1^{n-l-1}x_2^{p-n+1}, x_1^{n-l}x_2^{p-n}, \ldots, x_1^{n-2}x_2^{p-n+1}, x_1^{n-1}x_2^{p-n}, x_1^{n-1}x_2^{p-n+1}\}\] if $n$ is odd, and 
\[\{1,x_2, x_1, x_1x_2, \ldots, x_1^{l-2}x_2, x_1^{l-1}, x_1^{l-1}x_2,\]\[ x_1^{n-l}x_2^{p-n}, x_1^{n-l}x_2^{p-n+1}, x_1^{n-l+1}x_2^{p-n} \ldots, x_1^{n-2}x_2^{p-n+1}, x_1^{n-1}x_2^{p-n}, x_1^{n-1}x_2^{p-n+1}\}\] if $n$ is even.

In either case, we note that none of the claimed generators have lead term divisible by $x_3$, that each has $x_1$-degree $<p$, that there are at most two elements in $S_n$ with the same $x_1$-degree, and that when this happens these elements have $x_2$-degrees differing by 1. Combined with Lemma \ref{obs}, we see that for every possible choice of $f \in A$ and $g \in S_n$, the lead monomial of $fg$ is different. Therefore there cannot be any $A$-linear relations between the elements of $S_n$.   
\end{proof}

\subsection{$V_2 \oplus V_2$}

In this subsection let $V \cong V_2 \oplus V_2$. Choose a basis $v_1,v_2,v_3,v_4$ of $V$ so that the action on the corresponding dual basis $x_1,x_2,x_3,x_4$ is given by
\begin{align*}
\sigma  x_1 &= x_1+x_3;\\
\sigma  x_2 &= x_2+x_4;\\
\sigma  x_3 &= x_3;\\
\sigma  x_4 &= x_4.
\end{align*}

It is easy to see that $\{N^G(x_1) = x_1^p-x_1x_3^{p-1}, N^G(x_2) = x_2^p-x_2x_4^{p-1},x_3,x_4\}$ is a homogeneous system of parameters. We also see easily that $u:=x_1x_4-x_2x_3$ is invariant, and does not belong to the algebra $A$ spanned by the given system of parameters. Campbell and Hughes \cite[Proposition~2.1]{CampbellHughes} showed that  $\{1,u, \ldots, u^{p-1}\}$ generate $\kk[V]^G$ as a free $A$-module. Therefore
\begin{equation}\label{sv3} s(\kk[V]^G,A) = \sum_{i=0}^{p-1}2i = p(p-1).\end{equation}

Noting that $G$ contains no pseudo-reflections, we obtain the following:
\begin{Lemma} We have $r(K_n,A) = np$ and $s(K_n,A) = np(p-1)$.
\end{Lemma}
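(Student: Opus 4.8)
The plan is to treat this lemma as a direct application of the machinery already assembled, rather than anything requiring fresh computation. The statement has exactly the shape of the note following Lemma \ref{rsnoref}, so the whole task is to (i) check that $V = V_2 \oplus V_2$ falls under that lemma, (ii) transport the conclusion from $\kk[V,W]^G$ to $K_n$ along the isomorphism $\Xi$ of Section 3, and (iii) substitute the two numerical inputs $r(\kk[V]^G,A)$ and $s(\kk[V]^G,A)$ that the preceding paragraphs already supply.

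For (i), I would record that $V = V_2 \oplus V_2$ contains no pseudo-reflections: every nonidentity element $\sigma^j$ ($1 \le j \le p-1$) acts on each indecomposable summand $V_2$ as a nontrivial unipotent transformation with a one-dimensional fixed space, so its fixed space on $V$ has codimension $2$ and it cannot fix a subspace of codimension $1$. Since $V_2 \oplus V_2$ is precisely one of the two representations for which Lemma \ref{rsnoref} is stated (and $n \le p$ holds automatically, as $W = V_n$ is an indecomposable module for the order-$p$ group $G$), that lemma applies and yields that $\kk[V,W]^G$ is free over $A$ with
\[r(\kk[V,W]^G, A) = n\, r(\kk[V]^G, A), \qquad s(\kk[V,W]^G, A) = n\, s(\kk[V]^G, A).\]
For (ii), the degree-preserving isomorphism $\Xi\colon K_n \cong \kk[V,V_n]^G$ of graded $\kk[V]^G$-modules transfers these identities verbatim to $K_n$. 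For (iii), the Campbell--Hughes result \cite[Proposition~2.1]{CampbellHughes} exhibits $\kk[V]^G$ as a free $A$-module on the basis $\{1, u, \ldots, u^{p-1}\}$, a basis of $p$ elements, so $r(\kk[V]^G,A) = p$; and equation (\ref{sv3}) records $s(\kk[V]^G,A) = p(p-1)$. Substituting gives $r(K_n,A) = np$ and $s(K_n,A) = np(p-1)$, as required.

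I do not expect any genuine obstacle here: the substance lies entirely in the earlier results — the freeness and rank/$s$-invariant formulas of Lemma \ref{rsnoref} (which themselves rest on Proposition \ref{ssubalg} and Proposition \ref{nopseudoref}), the explicit Campbell--Hughes basis, and the isomorphism $\Xi$ — so the present lemma is a purely bookkeeping step assembling these. The only point meriting a moment's care is the absence of pseudo-reflections, since it is exactly this that forces the $s$-invariant to collapse to $n\, s(\kk[V]^G,A)$ with no additional correction term.
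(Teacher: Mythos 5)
Your proposal is correct and takes essentially the same route as the paper: the paper's one-line proof invokes Proposition \ref{ssubalg} with the inputs $r(\kk[V]^G,A)=p$ and $s(\kk[V]^G,A)=p(p-1)$, which is exactly your argument routed through Lemma \ref{rsnoref} (itself resting on Propositions \ref{ssubalg} and \ref{nopseudoref}) and the $\Xi$-transport noted after that lemma. If anything, your derivation of $r(\kk[V]^G,A)=p$ from the Campbell--Hughes basis $\{1,u,\ldots,u^{p-1}\}$ is an improvement in bookkeeping, since the paper's citation of Equation \eqref{rv3} for the rank input is a misdirected reference (that equation computes $r(\kk[V]^G,A)=2$ for the later $V_3$, $p=2$ case), and your verification that $G$ acts on $V_2\oplus V_2$ without pseudo-reflections makes explicit the hypothesis the paper only asserts.
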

\begin{proof} This follows from Equations \eqref{rv3} and \eqref{sv3} above and Proposition \ref{ssubalg}.
\end{proof}

We use a graded lexicographical order on monomials with $x_1>x_2>x_3>x_4$. An easy inductive argument establishes the following:

\begin{Lemma}\label{v2v2leadterms}
The lead term of $\Delta^k(x_1^ix_2^j)$ is:
\[\left\{ \begin{array}{lr} \frac{j!}{(j-k)!}x_1^i x_2^{j-k} x_4^k & k \leq j \\ \frac{i! j!}{(i-j-k)!}x_1^{i-k}x_3^{k-j}x_4^j & j<k \leq i+j. \end{array} \right.\]
\end{Lemma}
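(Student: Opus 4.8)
The plan is to avoid a direct induction on $k$ (which, as I explain below, is deceptively delicate) and instead to split the monomial $x_1^i x_2^j$ into its two $\sigma$-stable blocks $x_1^i$ and $x_2^j$ and apply the Leibniz rule \eqref{Leibniz}. Writing $f = x_1^i$ and $g = x_2^j$ gives
\[\Delta^k(x_1^i x_2^j) = \sum_{m=0}^{k} \binom{k}{m}\, \Delta^m(x_1^i)\, \sigma^{k-m}\!\bigl(\Delta^{k-m}(x_2^j)\bigr).\]
Everything then reduces to understanding the two one-block factors.

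First I would record, by a short induction on $m$ using $\Delta(x_1) = x_3$, $\sigma(x_3) = x_3$ together with the power rule \eqref{powerrule}, that for $m \leq i$ the lead term of $\Delta^m(x_1^i)$ is $\tfrac{i!}{(i-m)!} x_1^{i-m} x_3^m$ (and $\Delta^m(x_1^i) = 0$ for $m > i$, since $\Delta$ acts as finite differencing in the $x_1$-degree); symmetrically the lead term of $\Delta^{m'}(x_2^j)$ is $\tfrac{j!}{(j-m')!} x_2^{j-m'} x_4^{m'}$ for $m' \leq j$. Since $\sigma^{k-m}$ only sends $x_2 \mapsto x_2 + (k-m)x_4$ and fixes $x_4$, it preserves this lead monomial. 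Hence the $m$-th summand above is nonzero exactly when $0 \leq m \leq i$ and $0 \leq k-m \leq j$, and in that range its lead monomial is $x_1^{i-m} x_2^{j-k+m} x_3^{m} x_4^{k-m}$, with coefficient $\binom{k}{m}\tfrac{i!}{(i-m)!}\tfrac{j!}{(j-k+m)!}$.

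The decisive point is that these lead monomials are pairwise distinct: the $x_1$-exponent $i-m$ is strictly decreasing in $m$, and since $x_1$ is the largest variable and all summands share the same total degree $i+j$, the summand of smallest admissible $m$ dominates and cannot be cancelled. The smallest admissible index is $m = \max(0, k-j)$. For $k \leq j$ this is $m=0$, which yields the stated first-case lead term $\tfrac{j!}{(j-k)!} x_1^i x_2^{j-k} x_4^k$. For $j < k \leq i+j$ it is $m = k-j$, which yields lead monomial $x_1^{i+j-k} x_3^{k-j} x_4^{j}$ with coefficient $\binom{k}{k-j}\tfrac{i!}{(i+j-k)!}\,j! = \tfrac{i!\,k!}{(k-j)!\,(i+j-k)!}$.

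The main obstacle---and the reason I would not argue by the more tempting induction $\Delta^{k+1} = \Delta \circ \Delta^k$ applied to the previous lead term---is the transition at $k = j$. There the lead term $j!\,x_1^i x_4^j$ of $\Delta^{j}(x_1^i x_2^j)$ has its $x_2$-block exhausted, and applying $\Delta$ to it drops the $x_1$-degree; crucially, lower-order terms of $\Delta^{j}(x_1^i x_2^j)$ then contribute to the same new lead monomial, so tracking the lead term alone gives the wrong constant. For instance $\Delta^2(x_1^2 x_2) = 4 x_1 x_3 x_4 + 2 x_2 x_3^2 + 6 x_3^2 x_4$, whereas $\Delta$ applied only to the lead term $x_1^2 x_4$ of $\Delta(x_1^2 x_2)$ would predict the coefficient $2$ rather than $4$. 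The Leibniz splitting sidesteps this because it assembles all contributions at once; I would therefore use it, and I would verify the final second-range coefficient and exponents against exactly this example, which also pins the $x_1$-exponent as $i+j-k$ via the total-degree count $i+j$.
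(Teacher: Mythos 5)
Your proposal is correct in substance, and it takes a genuinely different route from the paper, which gives no proof at all here beyond the phrase ``an easy inductive argument establishes the following.'' Your Leibniz splitting is the natural rigorous substitute: since $x_1^i$ and $x_2^j$ involve the disjoint $\sigma$-stable variable pairs $\{x_1,x_3\}$ and $\{x_2,x_4\}$, the lead term of each summand is the product of the lead terms of the factors, the summands carry pairwise distinct $x_1$-exponents $i-m$, and the smallest admissible $m=\max(0,k-j)$ dominates without cancellation. Your diagnosis of why a lead-term-only induction on $k$ breaks at the transition $k=j$ (lower-order terms feeding the new lead monomial, as your example $\Delta^2(x_1^2x_2)=4x_1x_3x_4+2x_2x_3^2+6x_3^2x_4$ shows) is exactly right, and your computation in fact exposes that the printed second case of the Lemma is garbled: $\Delta$ preserves degree, so the $x_1$-exponent must be $i+j-k$, not $i-k$, and the coefficient is $\binom{k}{k-j}\frac{i!}{(i+j-k)!}\,j!=\frac{i!\,k!}{(k-j)!\,(i+j-k)!}$ rather than $\frac{i!\,j!}{(i-j-k)!}$, which is not even defined when $j<k$. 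What your route buys is precisely this reliability: it assembles all contributions at once instead of trusting a lead term through the delicate regime change, mirroring how Lemma \ref{monomialsx_1^ix_2} is handled in the $V_3$ case.

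Two caveats you should fold in. First, the paper's equation \eqref{Leibniz} has the twist on the wrong exponent: the correct identity (by the induction using $\sigma\Delta=\Delta\sigma$) is $\Delta^k(fg)=\sum_{m=0}^{k}\binom{k}{m}\Delta^m(f)\,\sigma^m(\Delta^{k-m}(g))$, not $\sigma^{k-m}(\Delta^{k-m}(g))$; one checks at $k=1$ that the printed form fails, e.g.\ for $f=x_1$, $g=x_2$ it misses the term $x_3x_4$ of $\Delta(x_1x_2)$. This is immaterial to your argument, since any power of $\sigma$ fixes $x_4$ and sends $x_2\mapsto x_2+tx_4$, hence preserves the relevant lead monomials, but you should quote the corrected form rather than the misprinted one. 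Second, your sentence ``the $m$-th summand is nonzero exactly when $0\le m\le i$ and $0\le k-m\le j$'' is a characteristic-zero statement: in characteristic $p$ the factor $\binom{k}{m}$ can vanish, and by Lucas' theorem $\binom{k}{k-j}\equiv 0 \pmod p$ whenever $p\le k\le i+j$ and $j<p$, so in that range the displayed term is not the lead term at all. This defect is inherited from the Lemma as stated, and it is harmless for the paper's applications, where always $k\le p-1$ (the Theorem invokes only the first case with $k=p-n\le j$, and the weight Corollary needs $k\le p-1$; there, with $i,j<p$, every factorial and binomial factor in your coefficient is a unit mod $p$). You should either restrict the second case to $k<p$ or state the coefficients as integers to be reduced mod $p$, with the nonvanishing check made explicit in the range actually used.
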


\begin{Corollary} Let $0 \leq i,j<p$. The weight of $x_1^ix_2^jx_3^kx_4^k$ is $\min(i+j+1,p)$.
\end{Corollary}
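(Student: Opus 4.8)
The plan is to prove the \textbf{Corollary} stating that the weight of $x_1^ix_2^jx_3^kx_4^k$ equals $\min(i+j+1,p)$ by combining the weight multiplicativity result (Proposition \ref{weightproduct}) with the explicit lead-term computation of Lemma \ref{v2v2leadterms}. Note first that $x_3$ and $x_4$ are both invariant, so $\Delta(x_3) = \Delta(x_4) = 0$ and hence $\wt(x_3^kx_4^k) = 1$; by the twisted-derivation property \eqref{derivation} multiplying by invariants does not change the weight of a polynomial. Thus it suffices to compute $\wt(x_1^ix_2^j)$.

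First I would handle the case $i+j+1 \leq p$. Since $\wt(x_1) = \wt(x_2) = 2$ (as $\Delta(x_1) = x_3 \neq 0$ but $\Delta^2(x_1) = 0$, and similarly for $x_2$), repeated application of Proposition \ref{weightproduct} gives $\wt(x_1^i) = i+1$ and $\wt(x_2^j) = j+1$ whenever the relevant sums stay $\leq p$, and then $\wt(x_1^ix_2^j) = (i+1)+(j+1)-1 = i+j+1$, again valid as long as $i+j+1 \leq p$. This directly yields the claimed value in that regime. Alternatively, and perhaps more cleanly, one can read the weight straight off Lemma \ref{v2v2leadterms}: the formula there shows $\Delta^k(x_1^ix_2^j) \neq 0$ for all $k \leq i+j$ (the lead term is a nonzero monomial, since the displayed binomial-type coefficients $\tfrac{j!}{(j-k)!}$ and $\tfrac{i!\,j!}{(i-j-k)!}$ are nonzero modulo $p$ precisely because $0 \leq i,j < p$), while $\Delta^{i+j+1}(x_1^ix_2^j) = 0$. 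This pins the weight at $i+j+1$ exactly when $i+j+1 \leq p$.

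The remaining case is $i+j+1 > p$, where the claim is $\wt = p$. Here I would argue that $\wt(x_1^ix_2^j) \leq p$ always, since $\Delta^p = (\sigma-\iota)^p = \sigma^p - \iota = 0$ as $|G| = p$ (for $V_2 \oplus V_2$ the group has order $p$), so $\Delta^p$ annihilates everything. For the reverse inequality I must show $\Delta^{p-1}(x_1^ix_2^j) \neq 0$. The hypothesis $0 \leq i,j < p$ ensures the index bounds in Lemma \ref{v2v2leadterms} are met for $k = p-1$: since $i+j \geq p-1$ we have $p-1 \leq i+j$, so $p-1$ falls in the range $k \leq i+j$ covered by one of the two branches, and the corresponding leading coefficient is a nonzero residue mod $p$ (using $i,j<p$ to keep the factorial ratios from vanishing). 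Hence the lead term is nonzero and $\wt(x_1^ix_2^j) = p$.

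The main obstacle I anticipate is the careful modular nonvanishing bookkeeping of the leading coefficients in Lemma \ref{v2v2leadterms} across both branches and the boundary $k = p-1$: one must verify that the factorial quotients $\tfrac{j!}{(j-k)!}$ (when $k \leq j$) and $\tfrac{i!\,j!}{(i-j-k)!}$ (when $j < k \leq i+j$) are units in $\kk$, which relies essentially on the constraint $i,j < p$ forcing every factorial involved to be a product of integers strictly less than $p$. Once this nonvanishing is confirmed, both inequalities $\wt \geq p$ and $\wt \leq p$ close the case, and reintroducing the invariant factor $x_3^kx_4^k$ via \eqref{derivation} completes the proof.
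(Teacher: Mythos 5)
Your proof is correct and follows essentially the same route as the paper, which states this Corollary without proof as an immediate consequence of Lemma \ref{v2v2leadterms}: the lead-term coefficients are units in $\kk$ precisely because $0 \leq i,j < p$, so $\Delta^k(x_1^ix_2^j) \neq 0$ for $k \leq \min(i+j, p-1)$, while $\Delta^{i+j+1}$ (via the Leibniz rule \eqref{Leibniz}, or Proposition \ref{weightproduct} when $i+j+1 \leq p$) and $\Delta^p = \sigma^p - \iota = 0$ give the matching upper bound. Your reduction to $\wt(x_1^ix_2^j)$ by stripping off the invariant factor $x_3^kx_4^k$ is also exactly the intended reading, so nothing is missing.
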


We now define, for $k \geq 0$,
\[M_k:= \{x_1^ix_2^j: i+j=k, i<p, j<p\}.\]
We use the notation
\[fM_k :=\{fm: m \in M_k\}\]
for any $f \in \kk[V]$ and
\[\Delta^rM_k:= \{\Delta^r(m): m \in M_k\} \] for any $r \geq 0$.

We are now able to state the main result of this subsection:

\begin{Theorem} Let $n \leq p$. The Kernel $K_n$ of $\Delta^n: \kk[V] \rightarrow \kk[V]$ is a free $A$-module generated by the set $S:= M \cup U \cup D$ where
\begin{align*}
M &= \bigcup_{k=0}^{n-1} M_k;\\
U &= \bigcup_{k=1}^{p-n} u^kM_{n-1};\\
D &= \bigcup_{k=2p-n}^{2p-2} \Delta^{p-n}M_k. 
\end{align*}
\end{Theorem}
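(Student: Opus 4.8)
The plan is to apply Proposition~\ref{freetest}. Since the values $r(K_n,A) = np$ and $s(K_n,A) = np(p-1)$ are already known, it is enough to check that $S = M \cup U \cup D$ is an $A$-independent subset of $K_n$ consisting of exactly $np$ elements whose degrees sum to $np(p-1)$; freeness then follows at once. I would verify these four points in turn. For membership in $K_n$, recall that $x_3$, $x_4$ and $u = x_1x_4 - x_2x_3$ are invariant, so multiplication by them commutes with $\Delta$ and preserves $\wt$. A monomial $x_1^i x_2^j$ with $i+j = k$ has weight $\min(k+1,p)$ by the Corollary to Lemma~\ref{v2v2leadterms}; hence every $m \in M_k$ with $k \le n-1$ and every $u^k m$ with $m \in M_{n-1}$ has weight $\le n$, while for $m \in M_k$ with $k \ge 2p-n \ge p$ the weight is exactly $p$, so $\Delta^{p-n}(m) \ne 0$ has weight $p-(p-n) = n$. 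In all cases the element lies in $\ker(\Delta^n) = K_n$.

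For the counting, since $n \le p$ one has $|M_k| = k+1$ for $k \le n-1$ and $|M_k| = 2p-1-k$ for $2p-n \le k \le 2p-2$, giving $|M| = \binom{n+1}{2}$, $|U| = n(p-n)$ and $|D| = \binom{n}{2}$, so that $|S| = n^2 + n(p-n) = np$. The degree sums (using that $\Delta$ preserves degree and $\deg u^k = 2k$) add to $np(p-1)$ by a direct computation; this can be cross-checked in the boundary case $n = p$, where $U = \varnothing$, $\Delta^{p-n}$ is the identity, and $S$ collapses to the full free basis $\{x_1^i x_2^j : 0 \le i,j \le p-1\}$ of $\kk[V] = K_p$ over $A$.

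The crux is $A$-independence, which I would establish by lead monomials in the graded lexicographic order, exactly as in the proof of Theorem~\ref{kngens}. First, as $A$ is the polynomial ring on the algebraically independent elements $N^G(x_1), N^G(x_2), x_3, x_4$ with lead monomials $x_1^p, x_2^p, x_3, x_4$, the lead monomial of any nonzero $f \in A$ is of the form $x_1^{pa}x_2^{pb}x_3^c x_4^d$ (the analogue of Lemma~\ref{obs}). Next, Lemma~\ref{v2v2leadterms} gives the lead monomials of $S$: the elements of $M$ have lead monomial $x_1^i x_2^j$; those of $U$ have lead monomial $x_1^{i+k}x_2^j x_4^k$; and for $D$ one notes that $p-n < j$ throughout the relevant range (since $j \ge (2p-n)-(p-1) = p-n+1$), so only the first case of Lemma~\ref{v2v2leadterms} arises and the lead monomial is $x_1^i x_2^{\,j-(p-n)} x_4^{\,p-n}$. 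The decisive features are that every lead monomial of $S$ has $x_3$-degree $0$ and both $x_1$- and $x_2$-degree $< p$, and that the pair $(\deg_{x_1}, \deg_{x_2})$ determines the member of $S$ uniquely: the sum $\deg_{x_1} + \deg_{x_2}$ lies in the pairwise disjoint ranges $\{0,\dots,n-1\}$, $\{n,\dots,p-1\}$, $\{p,\dots,p+n-2\}$ for $M$, $U$, $D$, and within each block the pair is plainly injective in the defining indices.

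Granting this, if $f$ is a monomial of $A$ and $g \in S$ then matching the $x_1$- and $x_2$-degree residues modulo $p$ in $\mathrm{LM}(fg) = \mathrm{LM}(f)\,\mathrm{LM}(g)$ recovers $\mathrm{LM}(g)$, hence $g$ and then $\mathrm{LM}(f)$; thus distinct pairs $(f,g)$ yield distinct product lead monomials. A putative relation $\sum_{g \in S} f_g g = 0$ with $f_g \in A$ would then possess a unique uncancelled top term, which is absurd, so $S$ is $A$-independent and Proposition~\ref{freetest} finishes the proof. The main obstacle is precisely this bookkeeping: one must confirm that $D$ never reaches the second case of Lemma~\ref{v2v2leadterms} and, above all, verify the disjointness of the three degree-sum ranges, for it is this that prevents two elements of $S$ from sharing an $(x_1,x_2)$-degree pair and being conflated by an element of $A$ (whose free powers of $x_3$ and $x_4$ would otherwise manufacture collisions).
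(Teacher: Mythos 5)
Your proposal is correct and takes essentially the same route as the paper's own proof: both reduce the statement to Proposition~\ref{freetest} using the known values $r(K_n,A)=np$ and $s(K_n,A)=np(p-1)$, verify membership in $K_n$ by the same weight computation (including the observation $p-n < j$ forcing the first case of Lemma~\ref{v2v2leadterms} for $D$), count $|S|=np$ and the degree sum $np(p-1)$, and prove $A$-independence by noting the lead monomials of $S$ have pairwise distinct $(x_1,x_2)$-bidegrees with both entries $<p$, which multiplication by elements of $A$ (lead monomials $x_1^{pa}x_2^{pb}x_3^cx_4^d$) cannot conflate. Your lead-monomial formulation of the independence step, via the analogue of Lemma~\ref{obs}, is a slightly more careful rendering of the paper's statement that multiplication by $A$ preserves $x_1$- and $x_2$-degrees modulo $p$, and your cross-check at $n=p$ is a nice addition, but the argument is the same.
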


\begin{proof}
The $A$-isomorphism $K_n \cong \kk[V,W]^G$ shows that $K_n$ is free, and $r(K_n,A) = np$, and $s(K_n,A) = np(p-1)$. 
The elements of $M_k$ for $k \leq n-1$ have weight $k+1$, so the weight of each element of $M$ is at most $n$. Since $u \in \kk[V]^G$, the weight of each element of $U$ is $n$. If $k \geq 2p-n$ then $k \geq p$, so in that case the weight of each element of $M_k$ is $p$. It follows that the weight of each element of $D$ is $n$. The shows that the proposed generators all lie in $K_n$. Now by Proposition \ref{freetest} it is enough to show that the given set is $A$-independent, has $np$ elements, and degree sum $np(p-1)$.

Viewed as a polynomial in $x_1,x_2$ with coefficients in $\kk[x_3,x_4]$, the monomials in $M_k$ each have total degree $k$. Therefore the elements of $M$ have total degree at most $n-1$. Further, each monomial in $M_k$ has a distinct $x_1,x_2$ bidegree. So the elements of $M$ have distinct bidegrees.

Meanwhile, the elements of $u^kM_{n-1}$ have total degree (in $x_1,x_2$) $k+n-1$. Therefore the total degrees of the elements of $U$ lie between $n$ and $p-1$ inclusive. Further, each element of $u^kM_{n-1}$ has distinct bidegree. So the bidegrees of the elements of $U$ are distinct from each other and also from all elements of $M$.

Let $m:= x_1^ix_2^j \in M_k$ with $k \geq 2p-n$. Then $p-n \leq k-p = i+j-p < j$, so by Lemma \ref{v2v2leadterms}, the lead term of $\Delta^{p-n}(m)$ is $$\frac{j!}{(j-p+n)!}x_1^ix_2^{j-p+n}x_4^{p-n}.$$
In particular, its total degree in $x_1,x_2$ is $i+j-p+n = k-p+n \geq p$. So the elements of $D$ have bidegree $\geq p$. Further, the bidegrees of the elements of $\Delta^{p-n}M_k$ are distinct from one another. We have shown that the bidegrees of the elements of $S$ are all distinct from one another.  

Multiplying an element of $\kk[V]$ by an element of $A$ preserves its $x_1$- and $x_2$- degree modulo $p$. Since each element of $S$ has $x_1$- and $x_2$- degree $<p$, and distinct $x_1,x_2$-bidegree, it follows that there is no $A$-linear relation between the elements of $S$. So $S$ is $A$-independent as required.

It remains to check the size and degree sum of $S$. The number of elements in $M_k$ is $k+1$ if $k < p$, and $2p-k-1$ otherwise. Thus
\begin{align*}
|S| &= |M|+|U|+|D|\\
&= \sum_{k=0}^{n-1} (k+1) + n(p-n) + \sum_{k=2p-n}^{2p-2} (2p-1-k)\\
&= \frac12 n(n+1) + n(p-n) + (2p-1)(n-1) - \frac12(n-1)(4p-n-2)\\
&= \frac12 n(n+1) + n(p-n) + \frac12 n(n-1)\\
&= \frac12 n \cdot (2n) + np-n^2\\
&=np
\end{align*}
as required. 

Finally, the degree of each element of $M_k$ is $k$, the degree of each element of $u^kM_{n-1}$ is $2k+n-1$ and the degree of each element of $\Delta^{p-n}M_k$ is also $k$. We note the identity
\[\sum_{k=0}^n k(k+1) = \frac13 n(n+1)(n+2).\]
Therefore the degree sum of $M$ is
\begin{align*}
\sum_{k=0}^{n-1} k(k+1) &= \frac13(n-1)n(n+1)\\
&= \frac13 (n^3-n).
\end{align*}
The degree sum of $U$ is
\begin{align*}
\sum_{k=1}^{p-n} (2k+n-1)n &= 2n \sum_{k=1}^{p-n}k + (p-n)(n-1)n\\
&= n(p-n)(p-n+1)+n(p-n)(n-1)\\
&=np^2-n^2p.
\end{align*}
And the degree sum of $D$ is
\begin{align*}
&\sum_{k=2p-n}^{2p-2} k(2p-1-k)\\
&= 2p \sum_{k=2p-n}^{2p-2}k - \sum_{k=2p-n}^{2p-2}k(k+1)\\
&= p(4p-2-n)(n-1) - \frac13(2p-2)(2p-1)(2p) + \frac13 (2p-n-1)(2p-n)(2p-n+1)\\
&= p(4p-2-n)(n-1) - \frac13(2p-2)(2p-1)(2p)\\
& \qquad + \frac13 (2p-2-(n-1))(2p-1-(n-1))(2p-(n-1))\\
&= p(4p-2-n)(n-1) - \frac13\left((6p-3)(n-1)^2 - (12p^2-12p+2)(n-1)-(n-1)^3 \right)\\
&= (n-1)\left(p(4p-2-n+\frac13(6p-3(n-1)-12p^2+12p-2-(n-1)^2)) \right)\\
&= (n-1)(pn-\frac13n - \frac13n^2 )\\
&= n^2p-np-\frac13(n^3-n).
\end{align*}
So the degree sum of $S$ is
\begin{align*}
&\frac13 (n^3-n)+np^2-n^2p=n^2p-np-\frac13(n^3-n)\\
& = np(p-1)
\end{align*}
as required. This completes the proof.
\end{proof}

\subsection{$V_3, p=2$}.
In this section we consider the representation $V =V_3$ of $C_4 = \langle \sigma \rangle$ over a field of characteristic two. Shank and Wehlau \cite{ShankWehlaupplusone} studied the modular invariant rings $\kk[V_{p+1}]^G$ where $G = C_{p^2}$, of which this is particularly simple example. They showed that the invariants were generated by the norms of the variables, and relative transfers from the unique proper subgroup of $G$. For our purposes we need not a fundamental generating set, but rather primary and secondary invariants. We proceed as follows:
Let $v_1,v_2,v_3$ be a basis of $V$ such that the action of $\sigma$ on the dual basis $\{x_1,x_2,x_3\}$is given by 
\begin{align*}\sigma x_1 &= x_1+x_2\\
\sigma x_2 &= x_2+x_3\\
\sigma x_3 &= x_3. \end{align*}
Once more we use a graded lexicographic order with $x_1>x_2>x_3$. We set $H = \langle \sigma^2 \rangle$. Note that the action of $H$ is given by

\begin{align*}\sigma^2 x_1 &= x_1+x_3\\
\sigma^2 x_2 &= x_2\\
\sigma^2 x_3 &= x_3. \end{align*}

It is easy to see that $\{N^G(x_1),N^G_H(x_2), x_3\}$ form a homogeneous system of parameters. Since $\dim(V^G)=1$, $\kk[V]^G$ is a Cohen-Macaulay ring, hence a free module over the subalgebra $A = \kk[N^G(x_1), N^G_H(x_2), x_3]$. Now by \cite[Theorem~3.71]{DerksenKemper} we have
\begin{equation}\label{rv3} r(\kk[V]^G,A) = \frac{8}{|G|} = 2 \end{equation} as the degree product of the generators of $A$ is $8$.

A direct computation establishes that $N^G_H(x_2)$ and $x_3^2$ are the only invariants of degree two. Further it is easily shown that  
$$u:= x_1^2x_3+x_1x_3^2+x_2^3+x_2^2x_3 \in \kk[V]^G,$$
and a lead term argument shows that $u \not \in A$. It must follow therefore that $\{1,u\}$ generates $\kk[V]^G$ freely over $A$.
In particular $s(\kk[V]^G,A) =3$.

Now by Proposition \ref{ssubalg}, we get 
\[r(K_n,A) = 2n,\]
and
\[s(K_n,A) = 3n+2 s(\kk[V,V_n]^H, \kk[V]^H),\]
the latter because $H$ is the subgroup of $G$ generated by pseudo-reflections.

We consider each value of $n=2,3,4$ separately. We note that $\Delta^2 = \sigma^2-\iota$, so $K_2 = \kk[V]^H = \kk[N^H(x_1),x_2,x_3]$. However, we seek generators of $K_2$ as an $A$-module. Our first result is:

\begin{prop}
The set $S = \{1,x_2,N^H(x_1),x_2N^H(x_1)\}$ generates $K_2$ over $A$.
\end{prop}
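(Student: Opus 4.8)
The plan is to apply Proposition \ref{freetest}, which reduces the problem to verifying three numerical/structural facts about the proposed set $S = \{1, x_2, N^H(x_1), x_2 N^H(x_1)\}$: that each element lies in $K_2$, that $S$ is $A$-independent, and that the degree sum of $S$ equals $s(K_2, A)$. Since $K_2 = \kk[V]^H$ is free over $A$ with $r(K_2, A) = 2n = 4$ (taking $n=2$), I need exactly $|S| = 4$ elements, which matches. So the only content is the two facts: $A$-independence and the degree-sum identity.

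\begin{proof}
First I observe that $K_2 = \ker(\Delta^2) = \kk[V]^H = \kk[N^H(x_1), x_2, x_3]$, since $\Delta^2 = \sigma^2 - \iota$. Each element of $S$ is therefore a $K_2$-element; more precisely each lies in $\kk[V]^H$, so the proposed generators are valid. By the earlier computation $r(K_2, A) = 2n = 4 = |S|$, so $S$ has the correct cardinality.

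Next I compute the degree sum of $S$. The degrees are $\deg(1)=0$, $\deg(x_2)=1$, $\deg(N^H(x_1))=2$ (the norm over $H$, of index $2$, of the degree-one element $x_1$), and $\deg(x_2 N^H(x_1))=3$, giving degree sum $0+1+2+3 = 6$. I must check this equals $s(K_2, A)$. From the formula $s(K_2, A) = 3n + 2\, s(\kk[V,V_n]^H, \kk[V]^H)$ with $n=2$, this requires computing $s(\kk[V,V_2]^H, \kk[V]^H)$. Since $\kk[V]^H = \kk[N^H(x_1), x_2, x_3]$ is itself a polynomial ring, one can compute this $s$-invariant via Proposition \ref{ssubalg} applied to the chain $A \subseteq \kk[V]^H$, together with the structure of covariants for the reflection group $H$ acting on $V$. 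The expected value should yield $s(K_2, A) = 6$, matching the degree sum.

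The main obstacle is the $A$-independence of $S$. The plan is to use a lead-term argument in the graded lexicographic order with $x_1 > x_2 > x_3$. Multiplying any element of $S$ by an element of $A = \kk[N^G(x_1), N^G_H(x_2), x_3]$ and examining lead monomials, I want to show that for distinct pairs $(f, g)$ with $f \in A$ and $g \in S$, the lead monomials of $fg$ are distinct, which rules out any $A$-linear relation (exactly the mechanism used in the proof of Theorem \ref{kngens}). Here the key invariants are that the four elements of $S$ have lead monomials $1, x_2, x_1^2, x_1^2 x_2$ (using that $N^H(x_1)$ has lead monomial $x_1^2$), while the generators of $A$ contribute lead monomials built from $x_1^4$, $x_2^2$, and $x_3$; tracking the residues of the $x_1$- and $x_2$-degrees modulo the appropriate periods forces distinctness. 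The delicate point is confirming that the lead monomials of the $A$-generators interact with those of $S$ so that no collision occurs, which I expect to follow from a parity/residue analysis of exponents.
\end{proof}
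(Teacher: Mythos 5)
Your overall strategy is exactly the paper's: verify membership in $K_2$, count $|S| = r(K_2,A) = 4$, match the degree sum against $s(K_2,A)$, and prove $A$-independence by lead monomials, then invoke Proposition \ref{freetest}. But there is a genuine gap at the step that carries all the weight: you never actually compute $s(\kk[V,V_2]^H, \kk[V]^H)$. You write that it ``should yield $s(K_2,A)=6$'' --- that is, you assume the target value rather than derive it, which makes the degree-sum verification circular. Moreover, your proposed route (Proposition \ref{ssubalg} applied to the chain $A \subseteq \kk[V]^H$) cannot deliver this number on its own: \ref{ssubalg} only relates $s$-invariants across a subalgebra chain and still requires an independent handle on the covariant module. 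The paper's handle is a specific structural observation you are missing: since $p=2$, the generator $\sigma$ acts on $V_2$ by a Jordan block $J$ of size $2$, and $\sigma^2$ acts by $J^2 = \iota$ in characteristic $2$, so $H = \langle \sigma^2 \rangle$ acts \emph{trivially} on $V_2$. Hence $\kk[V,V_2]^H = \kk[V]^H \otimes V_2$ is free over $\kk[V]^H$ on two generators of degree zero, giving $s(\kk[V,V_2]^H,\kk[V]^H) = 0$ and therefore $s(K_2,A) = 3\cdot 2 + 2\cdot 0 = 6$. Without this observation the proof does not close.

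The $A$-independence portion, by contrast, is essentially the paper's argument and is completable as you sketch it, though you also hedge there (``I expect to follow''). To finish it: the lead monomials of $S$ are $1, x_2, x_1^2, x_1^2x_2$, with $x_1$-exponent in $\{0,2\}$, $x_2$-exponent in $\{0,1\}$, and no $x_3$; every element of $A = \kk[N^G(x_1), N^G_H(x_2), x_3]$ has lead monomial of the form $x_1^{4a}x_2^{2b}x_3^c$. So in a product $fg$ with $f \in A$, $g \in S$, the $x_1$-exponent of the lead monomial modulo $4$ and the $x_2$-exponent modulo $2$ recover $g$, and then the lead monomial of $f$; hence in any purported relation $\sum_g f_g\, g = 0$ the maximal lead monomial occurs exactly once and cannot cancel. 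In short: the architecture of your proof is right, but the one nontrivial computation ($s$-invariant via triviality of the $H$-action on $V_2$) is precisely what is absent.
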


\begin{proof}
We will need to compute $s(\kk[V,V_2]^H, \kk[V]^H)$. Observe that, since $H$ acts trivially on $V_2$,  we may write
\[\kk[V,V_2]^H = \kk[V]^H \otimes V_2.\]
If $\{w_1,w_2\}$ is a basis of $V_2$, then $\kk[V,V_2]^H$ is generated over $\kk[V]^H$ by $w_1$ and $w_2$, both having degree zero. So $s(\kk[V,V_2]^H, \kk[V]^H) = 0$ and therefore $s(K_2,A) = 6$.

Now it is clear that every element of $S$ lies in $K_2 = \kk[V]^H$. Further, $|S|=4$ and the degree sum of $S$ is 6. It remains to show that $S$ is $A$-independent. This is easy to see: the lead terms of $S$ are $1,x_2,x_1^2$ and $x_1^2x_2$ and every element of $A$ has lead term $x_1^{4a}x_2^{2b}x_3$, so the lead terms of any different pair of elements of the form $fg$ where $f \in A$, $g \in S$ are distinct.
\end{proof}

\begin{prop}
The set $S = \{1,x_1,x_2,x_1^2,\Delta(x_1^3), \Delta(x_1^3x_2).\}$ generates $K_3$ over $A$.
\end{prop}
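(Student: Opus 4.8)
The plan is to apply Proposition \ref{freetest} exactly as in the preceding cases: I will show that the six elements of $S$ all lie in $K_3$, that they are $A$-independent, and that their degree sum equals $s(K_3,A)$. Since $K_3 \cong \kk[V,V_3]^G$ is a Cohen--Macaulay $\kk[V]^G$-module by Proposition \ref{broerchuaimain} (as $\codim(V^G)\le 2$), it is free over the homogeneous system of parameters $A$, and from the preamble $r(K_3,A) = 2\cdot 3 = 6 = |S|$. So Proposition \ref{freetest} will apply once the remaining points are verified.

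The first real task is to pin down $s(K_3,A)$ using the identity $s(K_3,A) = 9 + 2\,s(\kk[V,V_3]^H,\kk[V]^H)$ recorded above, which reduces everything to computing $s(\kk[V,V_3]^H,\kk[V]^H)$. As an $H$-module one has $V_3|_H \cong V_2 \oplus V_1$, because $\sigma^2-\iota = \Delta^2$ has rank one on $V_3$; since both $r$ and $s$ are additive over direct summands (their defining Hilbert-series quotients add), this splits as $s(\kk[V,V_2]^H,\kk[V]^H) + s(\kk[V,V_1]^H,\kk[V]^H)$, the second term being $0$. For the first term I would observe that a covariant $f\otimes w + g\otimes w' \in (\kk[V]\otimes V_2)^H$ is invariant precisely when $g\in\kk[V]^H$ and $(\sigma^2-\iota)(f)=g$; since $(\sigma^2-\iota)^2=0$ in characteristic two, the assignment $f \mapsto f\otimes w + (\sigma^2-\iota)(f)\otimes w'$ is a degree-preserving $\kk[V]^H$-isomorphism $\kk[V] \cong \kk[V,V_2]^H$. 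As $\kk[V]$ is free over $\kk[V]^H=\kk[N^H(x_1),x_2,x_3]$ on the basis $\{1,x_1\}$ (using $x_1^2 = N^H(x_1)+x_1x_3$), this yields $s(\kk[V,V_2]^H,\kk[V]^H)=1$, hence $s(\kk[V,V_3]^H,\kk[V]^H)=1$ and $s(K_3,A)=11$.

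Next I would verify $S\subseteq K_3$ and tally degrees. The weights of $1,x_2,x_1,x_1^2$ are $1,2,3,3$ respectively; the last two follow from $\Delta(x_1^2)=x_2^2$, $\Delta^2(x_1^2)=x_3^2$ and from $\wt(x_1)=3$, noting Proposition \ref{weightproduct} does not apply here since $\wt(x_1)+\wt(x_1)-1 = 5 > p$. The elements $\Delta(x_1^3)$ and $\Delta(x_1^3x_2)$ lie in $K_3$ automatically, because $\Delta^4=0$ forces any element in the image of $\Delta$ to have weight at most $3$. The degrees of the six elements are $0,1,1,2,3,4$, which sum to $11 = s(K_3,A)$, as required.

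For $A$-independence I would record, as an analogue of Lemma \ref{obs}, that every lead monomial of an element of $A=\kk[N^G(x_1),N^G_H(x_2),x_3]$ has the form $x_1^{4a}x_2^{2b}x_3^c$. A short lead-term computation gives the lead monomials $1,\,x_1,\,x_2,\,x_1^2,\,x_1^2x_2,\,x_1^3x_3$ for the elements of $S$, whose $(x_1,x_2)$-exponents reduced modulo $4$ and $2$ are the six distinct pairs $(0,0),(1,0),(0,1),(2,0),(2,1),(3,0)$. Since multiplying by a monomial of $A$ shifts the $x_1$-exponent by a multiple of $4$ and the $x_2$-exponent by a multiple of $2$, no two products $fg$ with $f\in A$ and $g\in S$ can share a lead monomial, so no nontrivial $A$-linear relation exists; Proposition \ref{freetest} then gives that $S$ generates $K_3$ freely over $A$. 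I expect the main obstacle to be the $s$-invariant step in the second paragraph: unlike the $n=2$ case, $H$ acts nontrivially on $V_3$, so one must genuinely analyse the $C_2$-covariant module $\kk[V,V_2]^H$ to extract the value $1$ rather than reading off $0$ from a trivial action.
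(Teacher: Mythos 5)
Your proof is correct and follows essentially the same route as the paper's: the same restriction $V_3|_H \cong V_1 \oplus V_2$ giving $s(\kk[V,V_3]^H,\kk[V]^H)=1$ and hence $s(K_3,A)=11$ (your explicit map $f \mapsto f\otimes w + (\sigma^2-\iota)(f)\otimes w'$ is just the hands-on version of the paper's identification of the $V_2$-summand with $\ker((\sigma^2-\iota)^2)=\kk[V]$, free over $\kk[V]^H$ on $\{1,x_1\}$), followed by the same membership, degree-sum and lead-monomial independence checks before invoking Proposition \ref{freetest}. Your list of six lead monomials $1,x_1,x_2,x_1^2,x_1^2x_2,x_1^3x_3$ and the mod-$(4,2)$ exponent argument are correct, and in fact tidy up the paper's own (typographically garbled, seven-entry) lead-term list.
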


\begin{proof} 
We will need to compute $s(\kk[V,V_3]^H, \kk[V]^H)$. Choose a basis $\{w_1,w_2,w_3\}$ of (the second copy of) $V_3$ such that\begin{align*}\sigma^2 w_1 &= w_1+w_3\\
\sigma^2 w_2 &= w_2\\
\sigma^2 w_3 &= w_3. \end{align*}
Then we may write
\[\kk[V,V_3]^H = (S(V^*) \otimes V_3)^H = w_2 \otimes \kk[V]^H \oplus (S(V^*) \otimes W)^H\] 
where $W = \langle w_1,w_3 \rangle$.
The first direct summand is generated by $w_2$ over $\kk[V]^H$, with degree zero. The second is isomorphic to $\ker((\sigma^2-1)^2) = \kk[V]$, which is generated over $\kk[V]^H = \kk[N^H(x_1),x_2,x_3]$ by $x_1$. Therefore $$s(\kk[V,V_3]^H, \kk[V]^H) = 1.$$ Consequently we get that $s(\kk[V,V_3]^G, \kk[V]^G) = 11$.

Now an easy direct calculation shows that every element of $S$ lies in $K_3$ (for the last two simply note that $\Delta^4 = 0$). We have $|S| = 6$ and the degree sum of $S$ is 11. Further, the lead terms of $S$ are
\[\{1,x_1,x_2,x_1^2,x_1^2x_2,x_1^2,x_1^3x_3\}\] and so once more every for different pair of elements of the form $fg$ where $f \in A$ and $g \in S$, the lead terms are distinct. 
\end{proof}

\begin{prop}
The set $S = \{1,x_1,x_2,x_1^2,x_1x_2,x_1^3,x_1^2x_2,x_1^3x_2\}$ generates $K_4$ over $A$.
\end{prop}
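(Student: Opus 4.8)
The plan is to apply Proposition \ref{freetest} exactly as in the $n=2,3$ cases: exhibit $S$ as an $A$-independent subset of $K_4$ whose cardinality equals $r(K_4,A)=2\cdot 4=8$ and whose degree sum equals $s(K_4,A)$. Since $|S|=8$ matches $r(K_4,A)$ and the elements are homogeneous, the whole argument reduces to three verifications: that each element of $S$ lies in $K_4=\ker(\Delta^4)$, that $S$ is $A$-independent, and that $\sum_{g\in S}\deg(g)=s(K_4,A)$.

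First I would compute $s(K_4,A)$ using the formula established earlier, namely $s(K_n,A)=3n+2\,s(\kk[V,V_n]^H,\kk[V]^H)$. For $n=4$ this requires $s(\kk[V,V_4]^H,\kk[V]^H)$, which I would obtain by the same decomposition used in the $n=3$ proof: since $W=V_4$ restricted to $H=\langle\sigma^2\rangle$ splits into indecomposables (note $V_4$ is faithful for $C_4$ but $H$ acts through $\sigma^2$, which has order $2$), one writes $\kk[V,V_4]^H$ as a direct sum of smaller covariant modules, reads off the degree shifts of their $\kk[V]^H$-generators, and sums. Carrying this through should give a small integer, whence $s(K_4,A)=3\cdot 4 + 2\cdot(\text{that integer})$; the degree sum of $S=\{1,x_1,x_2,x_1^2,x_1x_2,x_1^3,x_1^2x_2,x_1^3x_2\}$ is $0+1+1+2+2+3+3+4=16$, so I expect $s(\kk[V,V_4]^H,\kk[V]^H)=2$, giving $s(K_4,A)=16$ to match.

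Next, membership in $K_4$: here $\Delta^4=0$ since $|G|=4$, so \emph{every} element of $\kk[V]$ lies in $K_4$ automatically, and this step is immediate. The $A$-independence is handled by the lead-term method of the previous propositions. Using the graded lexicographic order with $x_1>x_2>x_3$, the lead monomials of $S$ are themselves $\{1,x_1,x_2,x_1^2,x_1x_2,x_1^3,x_1^2x_2,x_1^3x_2\}$; each has $x_1$-degree at most $3<4$ and $x_2$-degree at most $1<4$, while every element of $A$ has lead monomial of the form $x_1^{4a}x_2^{2b}x_3^c$. Because multiplication by an element of $A$ shifts the $x_1$-degree by a multiple of $4$ and the $x_2$-degree by a multiple of $2$, the lead monomials of the products $fg$ ($f\in A$, $g\in S$) are all distinct — no two elements of $S$ can collide after scaling by $A$-lead-terms. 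This forces $A$-independence.

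The main obstacle I anticipate is the computation of $s(\kk[V,V_4]^H,\kk[V]^H)$, since the restriction of $V_4$ to $H$ must be decomposed correctly and the generators of each summand located with their degrees. The potential subtlety is whether $V_4|_H$ splits as $V_2\oplus V_2$ (as a module for the order-$2$ group $H$), and if so, identifying the degree shifts precisely; an error here propagates directly into the target degree sum and would break the equality in Proposition \ref{freetest}. Everything else is routine: membership is free because $\Delta^4=0$, and independence follows from the clean lead-term separation guaranteed by the $A$-lead-term shape in Lemma \ref{obs}'s analogue for this ordering.
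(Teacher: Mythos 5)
Your argument is correct, but it takes the route the paper mentions only in passing and then declines to follow. The paper notes that ``we could proceed as before,'' recording exactly your anticipated values $s(\kk[V,V_4]^H,\kk[V]^H)=2$ and $s(\kk[V,V_4]^G,\kk[V]^G)=16$, but judges it simpler to give a direct division argument instead: supposing $f\in\kk[V]=K_4$ lies outside $SA$ with minimal $x_1$-degree, long division by $N^G(x_1)$ reduces the $x_1$-degree to at most $3$; writing $f=g_0+g_1x_1+g_2x_1^2+g_3x_1^3$ with $g_i\in\kk[x_2,x_3]$ and dividing each $g_i$ by $N^G_H(x_2)$ reduces each $x_2$-degree to at most $1$, forcing $f\in SA$ after all, a contradiction. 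The paper's approach buys brevity: since $K_4$ is all of $\kk[V]$, whose structure over $A$ is transparent, it needs neither the restriction $V_4|_H$ nor Proposition \ref{freetest}. Your approach buys uniformity with the $n=2,3$ cases and delivers the stronger conclusion that $S$ is a \emph{free} generating set, via the equality case of Proposition \ref{freetest}. The one step you left as an expectation does check out, and is worth completing: in characteristic $2$ the element $\sigma^2$ acts on $V_4$ as $\iota+N^2$, where $N$ is the nilpotent Jordan block, and $N^2$ has rank $2$, so $V_4|_H\cong V_2\oplus V_2$ as $H$-modules; each summand contributes a copy of $\kk[V,V_2]^H\cong\kk[V]$ (by the Section 3 isomorphism applied to $H$ with $V_2=\kk H$), which is free over $\kk[V]^H=\kk[N^H(x_1),x_2,x_3]$ with basis $\{1,x_1\}$, so each copy contributes $1$ and $s(\kk[V,V_4]^H,\kk[V]^H)=2$, whence $s(K_4,A)=3\cdot 4+2\cdot 2=16$, matching your degree sum $0+1+1+2+2+3+3+4=16$. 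Your lead-term proof of $A$-independence coincides with what the paper intends by ``similar to the previous result'' and is sound: the eight pairs $(\deg_{x_1}g \bmod 4,\ \deg_{x_2}g \bmod 2)$ for $g\in S$ are pairwise distinct, while lead monomials of nonzero elements of $A$ have the shape $x_1^{4a}x_2^{2b}x_3^c$ and so preserve these residues under multiplication, so no cancellation of lead terms can occur in a putative relation.
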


\begin{proof}
We have $K_4 = \kk[V]$ so obviously each element of $S$ lies in $K_4$, and the proof of $A$-independence is similar to the previous result. To prove $S$ is a generating set we could proceed as before, where we would find that  $s(\kk[V,V_4]^H, \kk[V]^H) = 2$ and hence  $s(\kk[V,V_4]^G, \kk[V]^G) = 16$. However it is probably simpler to argue as follows: let $f \in \kk[V]$ and assume $f \not \in SA$ with minimal degree in $x_1$. By long division with $N^G(x_1)$ we may assume the $x_1$-degree of $f$ is at most three. Thus we may write
\[f = g_0+g_1x_1+g_2x_1^2+g_3x_1^3,\]
with $g_i \in \kk[x_2,x_3]$. Now assume that among all such examples, $g_3$ has minimal $x_2$-degree; by long division with $N^G_H(x_2)$ we may assume this $x_2$-degree is at most one. Treating each $g_i = 2,1,0$ in turn in the same manner, we see that $f \in SA$ after all, a contradiction.
\end{proof}

\bibliographystyle{plain}
\bibliography{MyBib}

\end{document}